\theoremstyle{plain}
\newtheorem{theorem}{Theorem}[section]
\newtheorem{proposition}[theorem]{Proposition}
\newtheorem{lemma}[theorem]{Lemma}
\newtheorem{corollary}[theorem]{Corollary}
\theoremstyle{definition}
\newtheorem{remark}[theorem]{Remark}
\newcommand{\R}{\mathbb{R}}
\newcommand{\ra}{{\rightarrow}}
\let\cal\mathcal
\newcommand{\bR}{{\mathbb R}}
\def\P{\mathbb{P}}
\begin{document}
\title [SRB Entropy of  Convex Projective Structure]
      { Continuity of the SRB Entropy of  Convex Projective Structures}
        \author{Patrick Foulon and Inkang Kim}

        \date{}
        \maketitle
\begin{abstract}
The space of convex projective structures has been well studied with respect to the topological entropy. But, to better understand the geometry of the structure, we study the entropy of the Sinai-Ruelle-Bowen measure and show that it is a continuous function.
\end{abstract}
\footnotetext[1]{2000 {\sl{Mathematics Subject Classification.}}
51M10, 57S25.} \footnotetext[2]{{\sl{Key words and phrases.}}
Real projective structure, Sinai-Ruelle-Bowen measure, entropy.} \footnotetext[3]{I. Kim
gratefully acknowledges the partial support of Grant
(NRF-2017R1A2A2A05001002) and the warm support of CIRM during his
visit.}
\vskip 0.1 in
{\centerline {\it in memoriam Rufus Bowen}}

\section{Introduction}
A topological manifold $M$ can be equipped with a $(G,X)$-structure where $X$ is a model space and $G$ is a group acting on $X$, so that $M$ has an atlas $\{(\phi_i, U_i)\}$ from
open sets $\{U_i\}$ in $M$ into open sets in $X$ and the transition maps $\{\phi_i\circ \phi_j^{-1}\}$ are restrictions of  elements in $G$.
Depending on the choice of $(G,X)$, many interesting geometric structures can arise.  For instance, if $M$ is a closed surface with  genus at least 2, a hyperbolic
structure corresponds to $(\text{PSL}(2,\mathbb R),\mathbb H^2)$, a real projective structure  to $(\text{PGL}(3,\mathbb R), \mathbb{RP}^2)$, and a complex projective structure to $(\text{PSL}(2,\mathbb C), \mathbb{CP}^1)$.

Given a geometric structure, there exist a developing map $D:\widetilde M \ra X$ which is a local homeomorphism and a holonomy representation $\rho:\pi_1(M)\ra G$, so that $D$ is $\rho$-equivariant. In this paper, we study the real projective structures $(\text{PGL}(n+1,\mathbb R), \mathbb{RP}^n)$, and especially the strictly convex real projective structures $\cal P_n(M)$ on a closed manifold $M$,  that is when  $\Omega = D(\widetilde M)$ is a strictly convex open domain of $\mathbb{RP}^n$. 

The set $\cal P_n(M)$ of equivalence classes of such strictly convex real projective structures, in the case of a closed surface, is known as Hitchin component in a character variety \cite{cg, Hi}. It has drawn much attention recently and many aspects of the set have been studied. The space has been identified as the holomorphic vector bundle over Teichm\"uller space \cite{BH, Cal, CY, Lab,Lo2} using affine sphere theory. It is also identified with the space of Anosov representations \cite{La}. Explicit coordinates are studied \cite{bg, bk, fg, Goldman} and the space has a mapping class group invariant K\"ahler metric \cite{KZ}.
In this paper, we want to address the dynamical aspects related to the geodesic flow on the tangent bundle in any dimension.
Crampon  developed many aspects of this point of view in his papers \cite{Crampon, crampon, cra, cra1}.
There are several measures invariant under the geodesic flow, but we single out one measure, the Sinai-Ruelle-Bowen measure, abbreviated SRB measure. That measure provides, especially in dimension two, some  deep insight concerning the shape of the boundary at infinity. We will elaborate on this in the text.

The most studied invariant measure is the Bowen-Margulis measure. A key property used by Crampon is that this measure is reversible, i.e., it is invariant under the
flip map $\sigma(x, [v])=(x,[-v])$. The most eminent consequence is that the sum of the positive Lyapunov exponents associated to that measure is $n-1$, as in the hyperbolic metric case. 

Using the Ruelle inequality for the Bowen-Margulis measure, 
Crampon deduces that the topological entropy is less than or equal to $n-1$ with equality only in the hyperbolic case.   One key feature of the SRB measure is that the Ruelle inequality becomes an equality, known as the Pesin formula. When there is an invariant volume form, that form gives, after normalization, the density of the SRB measure. But we know from  Benoist \cite{Be} that if the convex structure is not the hyperbolic model, then there is no such an invariant volume form for the flow.
Nevertheless, in general the SRB measure is characterised by the fact that its conditional measures on unstable leaves are absolutely continuous to the Lebesgue measure (Ledrappier-Young  \cite{LY}).  This is why it turns out to be a key ingredient to better understand the geometry of projective structures. We also observe a global irreversible effect along the orientation of time. We will elaborate on this in Corollary \ref{irreversible} (3).
\\
\begin{theorem}\label{main} Let $M$ be a closed manifold of dimension $n$. Then the map $h_{SRB}:\cal P_n(M)\ra \mathbb R$ is continuous, where $h_{SRB}$ denotes the Sinai-Ruelle-Bowen measure entropy of the geodesic flow defined on the convex real projective manifold.
\end{theorem}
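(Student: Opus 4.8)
The plan is to realize all the structures as a continuously varying family of Anosov flows on one fixed manifold and then combine the Pesin formula with the statistical stability of Anosov flows. First I would fix a smooth manifold $W$ homeomorphic to the unit tangent bundle $T^1M$ and, for each $[\rho]\in\mathcal P_n(M)$ with convex domain $\Omega_\rho=D_\rho(\widetilde M)\subset\mathbb{RP}^n$, identify $W$ with $T^1\Omega_\rho/\rho(\pi_1(M))$, equipped with the Hilbert geodesic flow $\phi^\rho$. Since a continuous path in $\mathcal P_n(M)$ produces continuously varying developing maps, domains $\Omega_\rho$ (in the Hausdorff topology), holonomies, and hence Hilbert--Finsler metrics, one gets, after a continuous choice of identifications, that $\phi^\rho\to\phi^{\rho_0}$ in the $C^1$ topology on $W$ as $[\rho]\to[\rho_0]$. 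By Benoist \cite{Be} and Crampon \cite{Crampon,crampon} each $\phi^\rho$ is Anosov, with hyperbolicity constants that I expect to be uniform near $[\rho_0]$ by compactness; consequently the splitting $TW=\mathbb{R}X_\rho\oplus E^s_\rho\oplus E^u_\rho$ and the restricted differentials $D\phi^\rho_t|_{E^u_\rho}$ depend continuously on $([\rho],w)$.

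Next I would invoke the Pesin formula. Because the SRB measure $\mu_\rho$ exists and turns Ruelle's inequality into an equality (Ledrappier--Young \cite{LY}; in the projective setting, Crampon \cite{cra,cra1}),
\[
h_{SRB}([\rho])=\int_W u_\rho\,d\mu_\rho,\qquad u_\rho(w):=\frac{d}{dt}\Big|_{t=0}\log\det\!\big(D\phi^\rho_t|_{E^u_\rho(w)}\big),
\]
which is also the sum of the positive Lyapunov exponents of $\mu_\rho$. From the continuous dependence of $E^u_\rho$ and $D\phi^\rho_t$ above, together with Crampon's regularity estimates (which also give that $u_\rho$ is Hölder on $W$), I expect $u_\rho\to u_{\rho_0}$ uniformly on $W$. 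So the theorem reduces to the weak-$*$ continuity of $[\rho]\mapsto\mu_\rho$.

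To get that, I would use the characterization of $\mu_\rho$ as the unique equilibrium state for the geometric potential $-u_\rho$: the unique $\phi^\rho$-invariant probability with $h_\mu(\phi^\rho_1)-\int u_\rho\,d\mu=P_{\mathrm{top}}(\phi^\rho,-u_\rho)=0$. Since the $\phi^\rho$ are Anosov --- hence expansive, with the specification property and Markov codings --- and $-u_\rho$ is Hölder with $-u_\rho\to-u_{\rho_0}$ uniformly while $\phi^\rho\to\phi^{\rho_0}$ in $C^1$, the thermodynamic formalism of Bowen and Ruelle should give $\mu_\rho\to\mu_{\rho_0}$ weak-$*$ (via upper semicontinuity of the entropy map plus uniqueness of the equilibrium state, or via the transfer operator in symbolic coordinates). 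Combining this with the uniform convergence of $u_\rho$,
\[
h_{SRB}([\rho])=\int_W u_\rho\,d\mu_\rho\;\longrightarrow\;\int_W u_{\rho_0}\,d\mu_{\rho_0}=h_{SRB}([\rho_0]).
\]

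The hard part will be this last statistical-stability step. Two difficulties stand out. First, the Hilbert geodesic flow is only $C^1$ (with Hölder derivatives along orbits), so the classical $C^2$ theorems on continuity of SRB measures do not apply off the shelf; one must lean on the structural-stability / symbolic side of the theory, which is robust under $C^1$-perturbations and needs only Hölder potentials, and here Crampon's regularity results \cite{cra,cra1} are essential. Second, one must check that the hyperbolicity constants, the Hölder data for $u_\rho$, and the Markov codings can be taken uniform over a neighbourhood of $[\rho_0]$; this should follow from compactness and the uniform estimates of Benoist \cite{Be} and Crampon, but it needs to be set up carefully. A more hands-on alternative --- transporting the unstable conditional measures of $\mu_\rho$ along the orbit equivalences from structural stability and showing their densities converge --- runs into the same regularity issues and is, I suspect, no shorter.
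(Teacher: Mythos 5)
Your overall frame (Pesin's formula $h_{SRB}=\int u_\rho\,d\mu_\rho$ plus statistical stability of the SRB measure) is the right one, but the step you yourself flag as ``the hard part'' is precisely where the proof lives, and your proposal does not close it. The Hilbert geodesic flow is only $C^{1,\alpha}$, and Contreras-type regularity of $h_{SRB}$ and of the SRB measure, as well as the Bowen--Ruelle equilibrium-state machinery you want to invoke, are stated for $C^r$ Anosov flows with $r\geq 2$; ``robust under $C^1$-perturbations with H\"older potentials'' is not a theorem you can cite here, and the H\"older data of $u_\rho$ (which depends on the H\"older exponent of $\partial\Omega_\rho$ and of the Anosov splitting) is exactly the kind of quantity that is not obviously uniform in $\rho$. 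So as written the argument has a genuine gap: you reduce the theorem to a statistical-stability statement that is not available at the regularity you have.

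The idea you are missing is that the lack of smoothness sits entirely in the \emph{parametrization}, not in the orbit foliation: the orbits are straight lines in projective charts, hence the foliation is $C^\infty$. The paper fixes an auxiliary smooth Riemannian metric $g$ on $M$ and replaces $\phi^t$ by the flow $\psi^t$ with the same orbits but unit $g$-speed; $\psi^t$ is a $C^\infty$ Anosov flow, and it depends smoothly on the deformation parameter. One then transfers back to $\phi^t$ through the time-change function $\beta=F(d\pi Z)$ (which is $C^{1,\alpha}$ and varies continuously): Parry's synchronization result gives $\mu^\phi_{SRB}=\beta\,\mu^\psi_{SRB}/\int\beta\,d\mu^\psi_{SRB}$ and Abramov's formula gives $h_{SRB}(\phi)=h_{SRB}(\psi)/\int\beta\,d\mu^\psi_{SRB}$. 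Now Contreras' theorems apply legitimately to the smooth flow $\psi$: Theorem B gives continuity of $h_{SRB}(\psi_t)$ and Theorem C gives continuity of $\mu^{\psi_t}_{SRB}$ in $C^\alpha(M,\mathbb R)^*$, whence the denominator $\int\beta_t\,d\mu^{\psi_t}_{SRB}$ is continuous as well. If you want to salvage your route, you would either have to prove a statistical-stability theorem for $C^{1,\alpha}$ Anosov flows from scratch, or incorporate this reparametrization step, at which point you are essentially reproducing the paper's argument.
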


Given a strictly convex real projective manifold $M$ and the geodesic flow invariant SRB measure $\mu_{SRB}$, by ergodicity of the SRB measure,
there exists a set $W_{SRB}\subset HM=(TM\setminus \{0\})/\mathbb R^*_+$ with $\mu_{SRB}(W_{SRB})=1$, and $\chi_1,\cdots,\chi_p \in \mathbb R$ called the Lyapunov exponent relative to the SRB measure,
such that, for any $w\in W_{SRB}$, there exists a geodesic flow invariant decomposition $THM=\oplus E_i$ and for any $v\in E_i$,  $\chi_i(w,v)=\chi_i$, where $\chi_i(w,v)$ are Lyapunov exponents. See Sections \ref{pre} and \ref{Ly} for definitions and properties.
\begin{corollary}\label{irreversible}Let $M_t$ be a smooth family of real projective structures on $M$. Then
\begin{enumerate}
\item The  sum $\chi^+$ of the SRB positive Lyapunov exponents vary continuously in $t$.

\item  The entropy $h_{SRB}$ of the SRB measure satisfies  inequality
$$h_{SRB}=n-1 + \frac{1}{2}\eta \leq n-1$$   where $\eta$ is the SRB almost sure value of $\eta(w)=\sum \eta_i(w) {\text dim}E_i(w)$, the sum of parallel Lyapunov exponents. By Ruelle inequality, $\eta\leq 0$ and the equality holds if and only if the structure is hyperbolic.

\item (Irreversibility) Let $dvol$ be any continuous volume form. Then
$$\lim_{s\ra\infty} \frac{1}{s}\log(\frac{d(\phi^s)^*(dvol)}{dvol})=\sum \eta_i \text{dim} E_i= \eta.$$
\end{enumerate}
\end{corollary}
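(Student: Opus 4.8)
The plan is to deduce all three items from material that is already in place by this point in the paper: the Pesin entropy formula for the SRB measure (for $\mu_{SRB}$ the Ruelle inequality is an equality), the description in Section \ref{Ly} of the Lyapunov spectrum of the geodesic flow $(\phi^s)$ on $HM$ in terms of the parallel‑transport cocycle along geodesics, and the Ruelle inequality for an arbitrary invariant measure. Recall that $(\phi^s)$ is Anosov, with splitting $THM=\R X\oplus E^u\oplus E^s$ ($X$ the generator, $\dim E^u=\dim E^s=n-1$), and that for $\mu_{SRB}$ the unstable Oseledets exponents of $\phi^1$ are the numbers $1+\eta_i$ carried by sub‑bundles $E^u_i$, while the stable ones are $-1+\eta_i$ carried by $E^s_i$; the shifts $\eta_i$ are the parallel exponents, i.e.\ the Lyapunov exponents of the parallel‑transport cocycle, and they are the same on the stable and the unstable side at every point because $(\phi^s)$ acts on $E^u$ and on $E^s$ through one common cocycle $\tau^s$, namely $d\phi^s|_{E^u}=e^{s}\tau^s$ and $d\phi^s|_{E^s}=e^{-s}\tau^s$ in the natural identifications. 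Writing $\eta^u$ (resp.\ $\eta^s$) for the $\mu_{SRB}$‑almost sure value of $\sum_i\eta_i(w)\dim E^u_i(w)$ (resp.\ of $\sum_i\eta_i(w)\dim E^s_i(w)$), this gives $\eta^u=\eta^s$, and by definition of the full‑bundle quantity, $\eta=\eta^u+\eta^s=2\eta^u$.

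For (1): by the Pesin formula $h_{SRB}=\chi^+$, the sum with multiplicity of the positive Lyapunov exponents of $\phi^1$ for $\mu_{SRB}$; since Theorem~\ref{main} asserts that $h_{SRB}$ varies continuously in $t$, so does $\chi^+$. (Equivalently, the continuity of $\chi^+$ is precisely the quantity controlled in the proof of Theorem~\ref{main}.)

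For (2): combining the Pesin formula with the spectral description, $h_{SRB}=\chi^+=\sum_i(1+\eta_i)\dim E^u_i=(n-1)+\eta^u=(n-1)+\tfrac12\eta$, which is the stated identity. For the inequality, apply the Ruelle inequality to the time‑reversed flow $\phi^{-1}$ together with the measure $\mu_{SRB}$ — which is \emph{not} the SRB measure of $\phi^{-1}$, so only an inequality is asserted: the positive Lyapunov exponents of $\phi^{-1}$ are the numbers $1-\eta_i$ carried by the blocks $E^s_i$, whence $h_{SRB}=h_{\mu_{SRB}}(\phi^{-1})\le\sum_i(1-\eta_i)\dim E^s_i=(n-1)-\eta^s=(n-1)-\tfrac12\eta$; comparing with $h_{SRB}=(n-1)+\tfrac12\eta$ forces $\eta\le0$. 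Finally, if $\eta=0$ then $h_{SRB}=n-1$; since the variational principle gives $h_{SRB}\le h_{top}$ while Crampon's theorem gives $h_{top}\le n-1$, equality holds throughout, so $h_{top}=n-1$, which by Crampon's entropy rigidity occurs only for the hyperbolic structure; conversely, in the hyperbolic case the parallel transport is trivial, every $\eta_i$ vanishes, and $\eta=0$.

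For (3): the function $w\mapsto\dfrac{d(\phi^s)^*dvol}{dvol}(w)$ is the Jacobian of $\phi^s$ at $w$ measured with $dvol$, hence a continuous multiplicative cocycle over $(\phi^s)$; solving $\dfrac{d}{ds}(\phi^s)^*dvol=(\phi^s)^*\mathcal L_X dvol$ gives
\[
\log\frac{d(\phi^s)^*dvol}{dvol}(w)=\int_0^s\psi\bigl(\phi^r w\bigr)\,dr,
\]
with $\psi$ the continuous $dvol$‑divergence of $X$. By Oseledets' theorem, for $\mu_{SRB}$‑almost every $w$ the left‑hand side is asymptotic to $s$ times the sum with multiplicity of all Lyapunov exponents of $\phi^1$, namely $\chi^+ +0+\chi^-=\bigl((n-1)+\eta^u\bigr)+\bigl(-(n-1)+\eta^s\bigr)=\eta^u+\eta^s=\sum_i\eta_i\dim E_i=\eta$; in particular $\int\psi\,d\mu_{SRB}=\eta$. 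Therefore, by the Birkhoff ergodic theorem applied to the ergodic measure $\mu_{SRB}$ — and, since $(\phi^s)$ is Anosov so that the basin of $\mu_{SRB}$ has full Lebesgue measure, also for Lebesgue‑almost every $w$ — one gets $\tfrac1s\log\dfrac{d(\phi^s)^*dvol}{dvol}(w)=\tfrac1s\int_0^s\psi(\phi^r w)\,dr\to\int\psi\,d\mu_{SRB}=\eta$ as $s\to\infty$, which is the asserted limit.

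The only genuinely substantive inputs are those imported from the earlier sections — the Pesin formula $h_{SRB}=\chi^+$ in this non‑smooth, non‑volume‑preserving setting, Crampon's normalization $d\phi^s|_{E^u}=e^{s}\tau^s$, $d\phi^s|_{E^s}=e^{-s}\tau^s$ with the resulting symmetry $\eta^u=\eta^s$, and Crampon's entropy rigidity used in (2); granting these, the corollary is a short deduction, the only care needed being the bookkeeping of exponents under time reversal in (2) and the (standard) identification in (3) of the full‑measure set on which the Birkhoff limit is taken.
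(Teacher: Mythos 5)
Your treatment of (1) and (2) follows the paper's route essentially verbatim: Pesin's formula plus ergodicity identifies $h_{SRB}$ with the a.s.\ constant $\chi^+$, so continuity is inherited from Theorem~\ref{main}; the identity $h_{SRB}=(n-1)+\tfrac12\eta$ is Equation~(\ref{relation}), resting on the symmetry $\dim E_i^u=\dim E_i^s$ with common parallel exponent $\eta_i$ recorded in Equation~(\ref{Eta}); and your time-reversed Ruelle inequality is a correct, more explicit justification of the paper's terse ``by Ruelle inequality, $\eta\leq 0$'', with the equality case handled, as in the paper, by Crampon's bound $h_{top}\leq n-1$ and its rigidity.

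In (3) there is one step that does not survive the stated hypothesis: you write $\log\frac{d(\phi^s)^*dvol}{dvol}(w)=\int_0^s\psi(\phi^r w)\,dr$ with $\psi$ the $dvol$-divergence of $X$, but $dvol$ is only assumed \emph{continuous}, so $\mathcal{L}_X\,dvol$, hence $\psi$, need not exist, and the Birkhoff argument then has no integrand. Fortunately that detour is dispensable: your parallel appeal to Oseledets --- that $\tfrac1s\log$ of the Jacobian tends to $\sum\chi_i\dim E_i=\eta$ at regular points, via Equation~(\ref{det}) --- is the actual content and is exactly how the paper argues. What the paper supplies, and you should too, is the reason an arbitrary continuous $dvol$ computes the same exponential rate as $|\det d\phi^s|$: it evaluates $dvol$ on a frame adapted to the splitting $\R X\oplus E^u\oplus E^s$ and uses compactness of $HM$ to bound the resulting density above and below by positive constants, so the correction term is $O(1)$ and disappears after dividing by $s$. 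With that comparison inserted (and the limit asserted only at Oseledets-regular points, i.e.\ $\mu_{SRB}$-a.e., so the extra basin discussion is not needed), your proof of (3) coincides with the paper's.
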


As shown by Crampon \cite{cra1}, the Lyapunov exponents have to do with the convexity of the boundary of $\Omega$. It is particularly meaningful when the dimension of $\Omega$ is 2. The boundary $\partial \Omega$ in the neighborood of a point $p$ can be written as the graph of a convex function $f$ near the origin. Such a function $f$ is said to be {\it approximately $\alpha$-regular} at $p$ for $\alpha\in [1,\infty]$, if
$$\lim_{t\ra 0} \frac{\log \frac{f(t)+f(-t)}{2}}{\log |t|}=\alpha (p).$$ 
This quantity is invariant under affine and projective transformations. Approximately $\alpha$-regular means if $\alpha < \infty$ that the function behaves like $|t|^\alpha$ near the origin. 

\begin{corollary}\label{shape}
There is a set $F_{SRB} \subset \partial \Omega$ of full Lebesgue measure and a real number, $ 2 \leq \alpha_{SRB} < \infty$, called the regular exponent for the boundary, such that for any $p\in F_{SRB} $ we have $$\alpha(p) =\alpha_{SRB}$$
The map $\alpha_{SRB}:\cal P_2(M)\ra \mathbb R$ is continuous.\\
Furthermore, we have $$\chi^+ \alpha=2.$$
and
$\alpha_{SRB} = 2 $ if and only if the structure is hyperbolic.
\end{corollary}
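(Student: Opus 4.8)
The plan is to set $\alpha_{SRB}:=2/h_{SRB}$ and verify every clause of the statement for this value, reading each one off from the Pesin formula, from Corollary~\ref{irreversible}, and from Crampon's dictionary \cite{cra1} between the Lyapunov spectrum of $\phi^{t}$ and the convexity of $\partial\Omega$. Here $n=2$, so $HM$ is three--dimensional, the geodesic flow $\phi^{t}$ on $HM$ is a transitive Anosov flow (Benoist \cite{Be}, Crampon \cite{crampon}), and $THM$ splits as the line of the flow together with a one--dimensional stable and a one--dimensional unstable subbundle $E^{u}$. By the Pesin formula the SRB entropy equals the unique positive Lyapunov exponent, $h_{SRB}=\chi^{+}$; being the positive exponent of an Anosov flow, $\chi^{+}$ is bounded below by a positive constant, and by Corollary~\ref{irreversible}(2) we have $\chi^{+}=h_{SRB}=n-1+\frac{1}{2}\eta=1+\frac{1}{2}\eta\le1$ since $\eta\le0$. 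Let $\xi^{+}\colon HM\to\partial\Omega$ be the forward endpoint map, which assigns to $w$ the positive endpoint of the projective geodesic it determines; its fibres are the weak stable leaves of $\phi^{t}$, and on each strong unstable leaf $W^{uu}(w)$ it restricts to a homeomorphism onto $\partial\Omega$ with one point removed.

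First I would upgrade the pointwise geometry of \cite{cra1} to an almost--everywhere statement. Combining Crampon's analysis with the structure of Corollary~\ref{irreversible}, whenever the limits defining $\alpha(\xi^{+}(w))$ and $\chi^{+}(w)=\lim_{t\to\infty}\frac{1}{t}\log\|d\phi^{t}|_{E^{u}(w)}\|$ both exist one has
\[
\chi^{+}(w)\,\alpha\bigl(\xi^{+}(w)\bigr)=2 .
\]
By ergodicity of $\mu_{SRB}$, the function $w\mapsto\chi^{+}(w)$ takes the constant value $\chi^{+}$ on a set $W_{SRB}$ of full $\mu_{SRB}$--measure; hence $\alpha(\xi^{+}(w))=2/\chi^{+}=\alpha_{SRB}$ for all $w\in W_{SRB}$, i.e.\ $\alpha\equiv\alpha_{SRB}$ on $\xi^{+}(W_{SRB})\subset\partial\Omega$, a set of full measure for the pushforward $(\xi^{+})_{*}\mu_{SRB}$.

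Next I would transfer this from $(\xi^{+})_{*}\mu_{SRB}$ to Lebesgue measure on $\partial\Omega$. The defining property of the SRB measure is that its conditional measures on the unstable leaves are absolutely continuous with respect to Lebesgue (Ledrappier--Young \cite{LY}); pushing these conditionals forward along the arc--parametrisations $\xi^{+}\colon W^{uu}(w)\to\partial\Omega$ and disintegrating shows that $(\xi^{+})_{*}\mu_{SRB}$ is equivalent to Lebesgue measure on $\partial\Omega$. (Equivalently, $\mu_{SRB}$ is the physical measure of the Anosov flow $\phi^{t}$, so its basin has full Lebesgue measure in $HM$, whence $\chi^{+}(w)=\chi^{+}$ for Lebesgue--a.e.\ $w$, and the coarea formula applied to the submersion $\xi^{+}$ finishes the argument.) Therefore $F_{SRB}:=\{p\in\partial\Omega:\alpha(p)=\alpha_{SRB}\}$ has full Lebesgue measure, which is the first assertion, and $\chi^{+}\alpha_{SRB}=2$ by construction.

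The remaining clauses are then formal. Since $0<\chi^{+}\le1$ we obtain $2\le\alpha_{SRB}=2/\chi^{+}<\infty$, with $\alpha_{SRB}=2$ exactly when $\chi^{+}=1$, i.e.\ when $\eta=0$, which by Corollary~\ref{irreversible}(2) happens if and only if the structure is hyperbolic. For continuity, Corollary~\ref{irreversible}(1) gives that $\chi^{+}=h_{SRB}$ varies continuously on $\cal P_{2}(M)$, and it is positive at every structure, so $\alpha_{SRB}=2/\chi^{+}$ is continuous on $\cal P_{2}(M)$. The one substantial step is the displayed identity $\chi^{+}(w)\alpha(\xi^{+}(w))=2$: the task is to extract from \cite{cra1} the \emph{exact} relation, with the correct normalising constant and valid on a set carrying full SRB measure (hence, by the previous paragraph, full Lebesgue measure on $\partial\Omega$), rather than only a qualitative correspondence between Lyapunov data and convexity. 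A secondary point needing care is the regularity entering the absolute continuity of $(\xi^{+})_{*}\mu_{SRB}$ — the $C^{1}$--regularity of $\partial\Omega$ and of the unstable foliation — needed to push the absolutely continuous unstable conditionals forward to an absolutely continuous measure on the curve $\partial\Omega$.
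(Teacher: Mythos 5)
Your proposal is correct and follows essentially the same route as the paper: Pesin's formula plus ergodicity to make $\chi^{+}=h_{SRB}$ constant on a full SRB-measure set, Crampon's exact relation $\eta(w,v)=\frac{2}{\alpha(\xi)}-1$ (Equation (\ref{boundary}), together with $\chi^{+}=1+\eta/2$) to get $\chi^{+}\alpha=2$, absolute continuity (in fact equivalence) of the unstable conditionals of $\mu_{SRB}$ with Lebesgue to transfer the statement to a full Lebesgue measure subset of $\partial\Omega$ via the endpoint map on unstable leaves, and Theorem \ref{main} for continuity of $\alpha_{SRB}=2/h_{SRB}$. The one step you flag as needing care --- the exact normalisation of the Lyapunov/convexity dictionary --- is supplied verbatim in the paper's preliminaries (Equations (\ref{Lya}) and (\ref{boundary})), so there is no gap.
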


\subsection*{Remarks}
\begin{itemize}
\item This shows that if a stritcly convex projective structure is not the standard hyperbolic structure then the boundary is ``very" flat. In particular for almost every point $p$ on the boundary the local representing graph admits at least a  second derivative at $p$ that vanishes.  But the boundary is nowhere $C^2$. This remark makes more precise  the work of  Benoist \cite{Be} showing that the curvature of the boundary is localized on a set of null Lebesgue measure.
\item  In higher dimensions, similar exponents were introduced by Crampon and the general statement that we obtain is that their harmonic mean varies continuously.
\item Something  worth noticing and a bit surprising is  there exists no ideal regular odd-sided geodesic polygon if $\Omega$ is not conic. See the proof of  Corollary \ref{shape} and remark therein.
 \end{itemize}


{\bf  Acknowledgement}.  We thank  F. Ledrappier for numerous discussions.

\section{preliminaries}\label{pre}
\subsection{Projective structure}
Let $M$ be an $n$-dimensional closed manifold equipped with a strictly convex projective structure so that
$D:\widetilde M\ra \mathbb{RP}^n$ is a developing map with $D(\widetilde M)=\Omega$. Then it inherits 
a Hilbert metric defined in the following way. Let $x,y\in M$. Then choose $\tilde x,\tilde y$ in the same fundamental domain, and define $d(x,y):=d_\Omega(D(\tilde x), D(\tilde y))$ where the Hilbert metric $d_\Omega$ defined
as below and the corresponding Finsler norm on $T\Omega$ is denoted by $F$. More precisely,
 for $x\neq y\in \Omega$, let $p,q$ be the
intersection points of the line $\overline{xy}$ with $\partial\Omega$ such that $p,x,y,q$ are in this order. The
Hilbert distance is defined by
$$d_\Omega(x,y)=\frac{1}{2}\log \frac{|p-y||q-x|}{|p-x||q-y|}$$ where $| \cdot |$ is a Euclidean
norm in an affine chart containing $\Omega$. This metric coincides with the hyperbolic metric if
$\partial\Omega$ is a conic. Note that the Hilbert metric on $M$ also depends on the boundary of $\Omega$. The Hilbert metric is Finsler rather than
Riemannian. The Finsler norm $F=||\cdot||$ is given, for $x\in\Omega$ and a
vector $v$ at $x$, by
$$||v||_x=\big(\frac{1}{|x-p^-|}+\frac{1}{|x-p^+|}\big)|v|$$ where $p^\pm$
are the intersection points of the line with $\partial\Omega$, defined by $x$ and $v$ with the
obvious orientation, and where $| \cdot |$ is again 
a Euclidean norm.

If $\Omega$ admits a compact quotient manifold $M=\Omega/\Gamma$, then $\partial \Omega$ is $C^{1+\alpha}$, $\Gamma$ is Gromov hyperbolic and the geometry behaves like a negatively curved case.  See \cite{Be, Ben}.

For a given $w=(x,[\xi])\in H\Omega=(T\Omega\setminus \{0\})/\R_+^*$, the unstable manifold $W^{su}$ passing through $w$ is defined to be
$$W^{su}(w)=\{(y,[\phi])\in H\Omega| \xi(-\infty)=\phi(-\infty), y\in \cal H_{\sigma w}\}.$$ Here $\xi(-\infty)$ denotes $\gamma_\xi(-\infty)$ where $\gamma_\xi$ is the geodesic determined by $\xi$, and $\sigma w=(x,[-\xi])$ is a flip map, and $\cal H_w$ is the horosphere based at $\xi(\infty)$ passing through $x$.
Similarly one can define a stable manifold
$$W^{ss}(w)=\{(y,[\phi])\in H\Omega| \xi(\infty)=\phi(\infty), y\in\cal H_w\}.$$ These stable and unstable manifolds are $C^1$ if $\partial \Omega$ is $C^1$.

The tangent spaces of $W^{su}$ and $W^{ss}$ form unstable and stable vectors in $TH\Omega$, i.e., along the geodesic flow, they
 expand or decay exponentially.  All the objects defined above descend to the quotient manifold $M$ and it is known \cite{Be} that the geodesic flow on $HM$ is Anosov with invariant decomposition
$$THM=\R X\oplus E^s\oplus E^u,$$ where $X$ is the vector field generating the geodesic flow.

\subsection{Lyapunov exponents and Sinai-Ruelle-Bowen measure}\label{Ly}
\subsubsection{Lyapunov exponent}
Let $\phi=\phi^t$ be a $C^1$ flow on a Riemannian manifold $W$. A point $w\in W$ is said to be regular if there exists
a $\phi^t$-invariant decomposition
$$TW=E_1\oplus \cdots \oplus E_p$$ along $\phi^t w$ and real numbers
$$\chi_1(w)<\cdots< \chi_p(w),$$ such that, for any vector $Z_i\in E_i\setminus \{0\}$,
$$\lim_{t\ra\pm \infty}\frac{1}{t}\log ||d\phi^t(Z_i)||=\chi_i(w),$$ and
\begin{eqnarray}\label{det}
\lim_{t\ra\pm \infty}\frac{1}{t}\log |\text{det} d\phi^t|=\sum_{i=1}^p\text{dim}E_i \cdot \chi_i(w).
\end{eqnarray}
The numbers $\chi_i(w)$ associated with a regular point $w$ are called the Lyapunov exponents of the flow at $w$.
Due to Oseledets'  multiplicative ergodic theorem \cite{Os}, the set of regular points has  full measure.
\begin{theorem}Let $\phi$ be a $C^1$ flow on  a Riemannian manifold $W$ and $\mu$ a $\phi^t$-invariant probability measure. If
$$\frac{d}{dt}|_{t=0} \log ||d\phi^{\pm t}||\in L^1(W,\mu),$$ then the set of regular points has full measure.
\end{theorem}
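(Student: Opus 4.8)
The plan is to reduce the statement to the classical discrete-time Oseledets theorem \cite{Os} applied to the time-one map $f=\phi^1$, and then to interpolate the asymptotics from integer times to all real times.

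First I would replace the flow by its derivative cocycle $A(w):=d\phi^1_w$, so that $A^n(w):=A(f^{n-1}w)\cdots A(w)=d\phi^n_w$ is a cocycle over the $\mu$-preserving map $f$. To invoke the discrete Oseledets theorem one needs $\log^+\|A^{\pm1}\|\in L^1(W,\mu)$, and this is exactly where the hypothesis is used. From submultiplicativity $\|d\phi^{t+s}_w\|\le\|d\phi^s_{\phi^t w}\|\,\|d\phi^t_w\|$ and $d\phi^0_w=\mathrm{Id}$ one gets $\log\|d\phi^1_w\|\le\int_0^1 a(\phi^t w)\,dt$, where $a(y):=\limsup_{s\to0^+}\tfrac1s\log\|d\phi^s_y\|$ is the right upper derivative at $0$, placed in $L^1(W,\mu)$ by the hypothesis; $\phi$-invariance of $\mu$ then makes the right-hand side integrable, so $\log^+\|A\|\in L^1$. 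Running the same argument for the reversed flow $\phi^{-t}$ and using $\|A(w)^{-1}\|=\|d\phi^{-1}_{fw}\|$ gives $\log^+\|A^{-1}\|\in L^1$. The discrete Oseledets theorem then yields, for $\mu$-a.e.\ $w$, a splitting $T_wW=\bigoplus_{i=1}^p E_i(w)$ with $A(w)E_i(w)=E_i(fw)$, numbers $\lambda_1<\cdots<\lambda_p$ with $\lim_{n\to\pm\infty}\tfrac1n\log\|d\phi^n_w v\|=\lambda_i$ for $0\neq v\in E_i(w)$, and $\lim_{n\to\pm\infty}\tfrac1n\log|\det d\phi^n_w|=\sum_i\dim E_i(w)\cdot\lambda_i$.

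Next I would control the flow over a time interval of length one along a typical orbit. Let $a^-(y):=\limsup_{s\to0^+}\tfrac1s\log\|d\phi^{-s}_y\|$ (in $L^1$ by the hypothesis applied to $d\phi^{-t}$) and put $B(y):=\int_0^2\big(a(\phi^r y)+a^-(\phi^r y)\big)\,dr\in L^1(W,\mu)$; then for all $s\in[0,1]$ and all $n$ one has $e^{-B(\phi^n w)}\|u\|\le\|d\phi^s_{\phi^{n}w}u\|\le e^{B(\phi^n w)}\|u\|$. Since $B\in L^1$, a Borel--Cantelli argument applied to the sets $\{B\circ\phi^n>\varepsilon n\}$ (whose measures sum, by invariance, to at most $\varepsilon^{-1}\int B\,d\mu$) gives $\tfrac1n B(\phi^n w)\to0$ for $\mu$-a.e.\ $w$. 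For such $w$, writing $t=n+s$ with $s\in[0,1)$ and $0\neq v\in E_i(w)$, the factorization $d\phi^t_w=d\phi^s_{\phi^n w}\circ d\phi^n_w$ and the two-sided estimate give $\big|\log\|d\phi^t_w v\|-\log\|d\phi^n_w v\|\big|\le B(\phi^n w)$, hence $\tfrac1t\log\|d\phi^t_w v\|\to\lambda_i$ as $t\to+\infty$; the analogous estimate in backward time handles $t\to-\infty$, and applying the same factorization to determinants, with $|\det d\phi^s_{\phi^n w}|$ confined to $[e^{-(\dim W)B(\phi^n w)},\,e^{(\dim W)B(\phi^n w)}]$, upgrades \eqref{det} to $t\to\pm\infty$. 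Thus every such $w$ is regular for the flow, with Lyapunov exponents $\chi_i(w)=\lambda_i$ and decomposition $E_i(w)$.

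Finally I would check that the splitting is invariant under the entire flow, not just under $f=\phi^1$. For fixed $r$, the identity $d\phi^t_{\phi^r w}\circ d\phi^r_w=d\phi^r_{\phi^t w}\circ d\phi^t_w$ exhibits $d\phi^r_w$ as an intertwiner between the derivative cocycle at $w$ and that at $\phi^r w$; because the conjugating factors $\|d\phi^r_{\phi^t w}\|^{\pm1}$ grow subexponentially in $t$ along $\mu$-a.e.\ orbit — once more a consequence of $B\in L^1$ — the two cocycles share the same Lyapunov filtration, which forces $d\phi^r_w E_i(w)=E_i(\phi^r w)$ for every $r\in\R$. Combining the three steps, the set of flow-regular points has full $\mu$-measure. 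I expect the one genuinely delicate point to be this passage from integer to real time, namely showing that the derivative of the flow over unit-length subintervals of a typical orbit is subexponential; that is precisely the role of the integrable dominating function $B$ and the accompanying Borel--Cantelli argument, which in turn rest on the hypothesis $\tfrac{d}{dt}\big|_{t=0}\log\|d\phi^{\pm t}\|\in L^1(W,\mu)$.
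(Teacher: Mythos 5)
The paper offers no proof of this statement at all: it is quoted as the flow version of Oseledets' multiplicative ergodic theorem (cited to Oseledets), with the integrability hypothesis on $\frac{d}{dt}\big|_{t=0}\log\|d\phi^{\pm t}\|$ serving only to guarantee that the theorem applies. So there is nothing in the text to compare your argument against; what you have written is the standard derivation of the continuous-time theorem from the discrete-time one, and it is essentially correct. The three ingredients --- integrability of $\log^{+}\|d\phi^{\pm 1}\|$ via the Dini-derivative estimate $\log\|d\phi^{1}_{w}\|\le\int_{0}^{1}a(\phi^{r}w)\,dr$ together with invariance of $\mu$; the temperedness of the unit-time distortion $B$ along almost every orbit (your Borel--Cantelli argument, or more directly Birkhoff applied to $B\in L^{1}$); and the subexponential intertwining that transports the Oseledets splitting and the determinant asymptotics from integer to real times --- are exactly those of the textbook proof. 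Two small points deserve care if you write this up. First, $a$ and $a^{-}$ need not be nonnegative, so $B$ should be built from their positive parts for the two-sided bound $e^{-B}\le \|d\phi^{s}_{y}u\|/\|u\|\le e^{B}$ to be legitimate. Second, the flow-invariance of the splitting is not automatic from the intertwining identity alone: you must define $E_{i}(\phi^{r}w):=d\phi^{r}_{w}E_{i}(w)$ along the whole orbit and check, via a Fubini argument on $W\times[0,1]$, that this agrees with the discretely constructed splitting at those $\phi^{r}w$ that lie in the discrete full-measure set. Neither issue is a gap in substance; your proposal supplies a proof where the paper supplies only a citation.
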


\subsection{In the case of convex projective structures}
There exists an extended Finsler norm $\bar F$ on $HM=(TM\setminus \{0\})/\bR^*_+$ and the Lyapunov exponent is defined to be
$$\lim_{t\ra\pm \infty}\frac{1}{t}\log \bar F(d\phi^t(Z_i))=\chi_i(w).$$

Let $\Omega$ be a strictly convex domain with $C^1$ boundary equipped with a Hilbert metric. There is a notion of parallel transport $T^t$ due to Foulon \cite{Fu1, Fu2}. The parallel Lyapunov exponent of $v\in T_x\Omega$ along $\phi^t(x,[\psi])$ is defined to be
$$\eta((x,[\psi]),v)=\lim_{t\ra\infty}\frac{1}{t}\log F(T^t(v)).$$

Then Crampon  \cite{cra} showed that a point $w=(x,[\psi])\in H\Omega$ is regular if and only if there exists a decomposition
$$T_x\Omega=\R \psi\oplus E_0(w)\oplus (\oplus_{i=1}^p E_i(w))\oplus E_{p+1}(w),$$ and real numbers
$$-1=\eta_0(w)<\eta_1(w)<\cdots <\eta_p(w)< \eta_{p+1} = 1,$$
such that
for any $v_i\in E_i\setminus \{0\}$,
$$\lim_{t\ra\pm\infty}\frac{1}{t}\log F(T^t_w(v_i))=\eta_i(w),$$ and
$$\lim_{t\ra\pm\infty}\frac{1}{t}\log |\text{det} T^t_w|=\sum_{i=0}^{p+1} \text{dim} E_i(w) \eta_i(w):=\eta(w).$$   Here $E_0$ and $E_{p+1}$ could be zero.

The relation between Lyapunov and parallel Lyapunov exponents is; for stable $Z^s$ and unstable $Z^u$ vectors in $T_wH\Omega$,
\begin{eqnarray}\label{Lya}
\chi(Z^s)=-1+\eta(w,d\pi(Z^s)), \chi(Z^u)=1+\eta(w,d\pi(Z^u)),
\end{eqnarray} where $\pi:H\Omega\ra \Omega$ is the projection. See  Proposition 1 of \cite{cra} for details.
\begin{remark} The reversibility of the geodesic flow induces for regular points a special property observed by Crampon namely
$$\eta((x,[\psi]),v) = - \eta(\sigma(x,[\psi]), \sigma (v))$$
This means that if a trajectory of the flow is regular then the two opposite boundaries at $\infty$ are strongly related.
\end {remark}
Hence one can summerize these facts as:
\begin{eqnarray}\label{Eta}
TH\Omega=\mathbb R X \oplus (\oplus_{i=0}^{p+1}(E_i^s\oplus E_i^u))\end{eqnarray}
 where $E_i^s=J^X(E_i^u)$ for some pseudo-complex structure $J^X$ and $\chi_i^-$ is a Lyapunov exponent for $E_i^s$, $\chi_i^+$ for $E_i^u$, $E_i=E_i^s\oplus E_i^u$
$$\chi_i^+=1+\eta_i, \chi_i^-=-1+\eta_i, \chi_i^+=\chi_i^-+2,$$$$-2=\chi_0^-<\chi_1^-<\cdots<\chi_{p+1}^-=0=\chi_0^+<\chi_1^+<\cdots <\chi_{p+1}^+=2.$$
Note that 
$$\chi^+=\sum \text{dim} E_i^u \chi_i^+=\sum \text{dim} E_i^u(1+\eta_i)=\sum \text{dim} E_i^u +\sum \text{dim} E_i^u \eta_i$$
$$=(n-1)+\sum \text{dim} E_i^u \eta_i.$$
But
$$\eta=\sum \text{dim} E_i\eta_i=2 \sum  \text{dim} E_i^u \eta_i.$$
Hence
\begin{eqnarray}\label{relation}
\chi^+=(n-1)+\frac{1}{2}\eta.
\end{eqnarray}


The Lyapunov exponents have to do with the convexity of the boundary of $\Omega$. For 2-dimensional $\Omega$, the neighborhood of each point of the boundary $\partial \Omega$ can be written as the graph of a convex function $f$ near the origin. Such a function $f$ is said to be {\it approximately $\alpha$-regular} at the origin for an $\alpha\in [1,\infty]$, if
$$\lim_{t\ra 0} \frac{\log \frac{f(t)+f(-t)}{2}}{\log |t|}=\alpha.$$ This quantity is invariant under affine and projective
transformations. Approximately $\alpha$-regular for $\alpha$ finite means that the function behaves like $|t|^\alpha$ near the origin. 

Let $\Omega$ be a strictly convex open domain in $\R \P^2$ with $C^1$ boundary. Let $\xi\in\partial\Omega$. Let $w=(x,[\psi])\in H\Omega$ be such that  $\psi(\infty)=\xi$. Let $\cal H_w$ be a horocycle based at $\xi$ and passing through $x$.
It is shown in \cite{cra} (Theorem 1) (or \cite{Crampon} (Proposition 3.4.12)) that for any $v(w)\in T_x\cal H_w$,
\begin{eqnarray}\label{boundary}
\eta(w,v(w))=\frac{2}{\alpha(\xi)}-1.
\end{eqnarray}
\begin{remark} According to the remark about reversibility, for all regular trajectory $ (x, [\psi]) $ we have 
$$ \frac{1 }{ \alpha(\xi)} + \frac{1}{ \alpha(\sigma( \xi))}  =1 $$ where $\sigma(\xi)=\psi(-\infty)$.
\end{remark}

Periodic orbits are regular trajectories, and all these quantities can be expressed via the representation.
Suppose $\gamma$ is a hyperbolic isometry whose eigenvalues are $\lambda_1>\lambda_2>\lambda_3$ and $\gamma^+=\psi(\infty)$. Then it is shown in Proposition 5.5 of \cite{crampon} (or in Section 3.6 of \cite{Crampon}) that
\begin{eqnarray}\label{lambda}
\eta(w, v(w))=-1+2 \frac{\log\frac{\lambda_1}{\lambda_2}}{\log\frac{\lambda_1}{\lambda_3}}, \end{eqnarray}
 hence 
$$\alpha(\gamma^+)^{-1}=\frac{\log\frac{\lambda_1}{\lambda_2}}{\log\frac{\lambda_1}{\lambda_3}}.$$  


\subsubsection{Invariant measures}
For a geodesic flow on $H\Omega$, the maximal entropy $h_\mu(\phi)$ of the probability measure $\mu$ is known to be realized at the Bowen-Margulis measure.
This entropy is equal to the topological entropy $h_{top}(\phi)$  of the geodesic flow $\phi$ and it is also equal to the exponential growth of the lengths of  closed geodesics:
$$\lim_{R\ra\infty} \frac{\log \#\{[\gamma]|\ell(\gamma)\leq R\}}{R}.$$  This is again equal to the critical exponent of the associated Poincar\'e series. It is proved by Crampon \cite{crampon} that the entropy of Bowen-Margulis measure of strictly convex real projective structures  on a closed surface is between 0 and 1. 
But there exists another invariant measure called {\bf Sinai-Ruelle-Bowen measure}, abbreviated SRB measure. It is characterized as follows.
First we recall a fundamental theorem known as Margulis-Ruelle inequality:
\begin{theorem}Let $M=\Omega/\Gamma$ be a strictly convex compact projective manifold. Let $\mu$ be a geodesic flow invariant probability measure on $HM$. Then
$$h_\mu(\phi)\leq \int \chi^+ d\mu,$$ where $\chi^+=\sum \text{dim} E_i\cdot \chi_i^+$ denotes the sum of positive Lyapunov exponents.
\end{theorem}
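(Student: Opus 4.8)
The plan is to recognise the statement as the Margulis--Ruelle inequality and to obtain it from the classical version for $C^1$ diffeomorphisms of compact manifolds. Since $M$ is closed, $HM$ is a compact smooth manifold, and (as recalled in this section) the geodesic flow $\phi^t$ is a $C^1$ Anosov flow on it; fix once and for all an auxiliary smooth Riemannian metric on $HM$, which affects none of the quantities below since any two continuous norms on $THM$ are uniformly comparable over the compact base. First I would pass to the time-one map $\phi^1$, a $C^1$ diffeomorphism preserving $\mu$: by definition of the entropy of a flow $h_\mu(\phi)=h_\mu(\phi^1)$, the Oseledets splitting and exponents of $\phi^1$ at a regular point coincide with those of the flow (because $\frac1n\log\|D_w\phi^n\|$ and $\frac1t\log\|d\phi^t\|$ have the same limits, $\|d\phi^t\|$ being bounded for $t\in[0,1]$), and the flow direction $\R X$ carries exponent $0$ since $d\phi^tX$ has norm bounded above and below; hence $\R X$ contributes nothing to the sum of positive exponents. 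Thus it suffices to prove $h_\mu(\phi^1)\le\int\sum_{\chi_i(w)>0}\dim E_i(w)\,\chi_i(w)\,d\mu(w)$, and the right-hand side is exactly $\int\chi^+\,d\mu$ in view of the decomposition $THM=\R X\oplus\bigoplus_i(E_i^s\oplus E_i^u)$ and $\chi_i^\pm=\pm1+\eta_i$ recalled above.

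For the inequality itself I would run Ruelle's covering argument (see also Ma\~n\'e's book). Fix $n\ge1$ and a finite Borel partition $\mathcal A$ of $HM$ into sets of diameter $<r$. Using $h_\mu(\phi^n,\mathcal A)\le H_\mu(\mathcal A\mid\phi^n\mathcal A)$ and estimating the conditional entropy by the logarithm of the number of atoms of $\mathcal A$ meeting the $\phi^n$-image of a single atom, one obtains, after letting $r\to0$ with $n$ fixed, $h_\mu(\phi^n)\le\log C+\int\log^+\!\big(\max_{0\le k\le\dim HM}\|\Lambda^kD_w\phi^n\|\big)\,d\mu(w)$, where $C$ depends only on $\dim HM$ and $\|\Lambda^k\cdot\|$ is the largest product of $k$ singular values (the number of $r$-balls needed to cover an ellipsoid with semiaxes $s_ir$ being comparable to $\prod_{s_i>1}s_i=\max_k\|\Lambda^k\cdot\|$); it is the uniform modulus of continuity of $D\phi^n$ that legitimises the $r\to0$ limit for fixed $n$. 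Dividing by $n$ and using $h_\mu(\phi^n)=n\,h_\mu(\phi)$, the term $\frac1n\log C$ vanishes as $n\to\infty$, while Oseledets' theorem gives $\frac1n\log\|\Lambda^kD_w\phi^n\|\to\chi_1(w)+\dots+\chi_k(w)$ for $\mu$-a.e.\ $w$, so $\frac1n\log^+\max_k\|\Lambda^kD_w\phi^n\|\to\sum_{\chi_i(w)>0}\chi_i(w)$; this sequence being bounded by the $\mu$-integrable constant $\dim HM\cdot\log^+\|D\phi^1\|_\infty$, dominated convergence yields $h_\mu(\phi)\le\int\chi^+\,d\mu$.

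The bulk of the argument is classical and poses no real difficulty; the one ingredient genuinely specific to the present setting --- and the point I would check most carefully --- is the regularity input, namely that the Hilbert geodesic flow $\phi^t$ is indeed $C^1$ on the compact manifold $HM$, so that $\phi^1$ is a $C^1$ diffeomorphism and the covering estimate applies; this is exactly where Foulon's dynamical formalism and Crampon's regularity results are used, and it is also what makes $\frac{d}{dt}\big|_{t=0}\log\|d\phi^{\pm t}\|$ bounded, hence in $L^1(\mu)$. It is worth noting that $\mu$ need not be ergodic: Oseledets' theorem still furnishes measurable exponent functions $\chi_i(\cdot)$ and subspaces $E_i(\cdot)$, and the inequality holds with the integrated right-hand side, as the statement asserts.
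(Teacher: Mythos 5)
The paper does not prove this statement at all: it is recalled as ``a fundamental theorem known as Margulis--Ruelle inequality'' and used as a black box (it is the classical Ruelle inequality, due to Ruelle for diffeomorphisms and adapted to flows in the standard way). Your proposal supplies the standard proof, and it is essentially correct: the reduction to the time-one map (with the observations that $h_\mu(\phi)=h_\mu(\phi^1)$, that the Oseledets data of $\phi^1$ and of the flow agree because $\|d\phi^t\|$ is uniformly bounded for $t\in[0,1]$, and that the flow direction carries exponent $0$), the covering estimate via the largest products of singular values $\max_k\|\Lambda^k D_w\phi^n\|$, the division by $n$, and the passage to the limit by Oseledets plus dominated convergence with the bound $\dim HM\cdot\log^+\|D\phi^1\|_\infty$ coming from submultiplicativity. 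You also correctly isolate the only input specific to this setting, namely that the Hilbert geodesic flow on the compact manifold $HM$ is $C^{1}$ (in fact $C^{1,\alpha}$ by Benoist, as the paper recalls), which is all the classical argument needs, and that replacing the Finsler norm $\bar F$ by an auxiliary Riemannian norm changes nothing since the two are uniformly comparable on a compact manifold. Two small points you should tighten if you write this out in full: the counting step requires partitions whose atoms have controlled geometry (each atom containing and contained in comparable balls), not merely small diameter, so that ``number of atoms met by $\phi^n(A)$'' is genuinely comparable to a covering number of the image ellipsoid; and the conditional-entropy inequality you want is $h_\mu(f,\mathcal A)\le H_\mu(\mathcal A\mid f\mathcal A)$ applied to $f=\phi^n$ together with the fact that partitions of shrinking diameter compute $h_\mu(\phi^n)$ in the limit. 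Neither is a gap in substance; both are handled in the standard references you cite.
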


An invariant measure that achieves equality in Margulis-Ruelle inequality is called  Sinai-Ruelle-Bowen measure.

Another description of this measure $\mu_{SRB}$ is: there exists a set $V$ of full Lebesgue measure
such that for each continuous function $f:M\ra \R$ and for every $x\in V$,
$$\lim_{T\ra\infty} \frac{1}{T}\int_0^T f(\phi^s(x))ds=\int f d\mu_{SRB}.$$

Ledrappier-Young \cite{LY} proved the following characterization of a Sinai-Ruelle-Bowen measure:
\begin{theorem}Let $M$ be a compact, strictly convex real projective manifold. Then the geodesic flow invariant measure $\mu$ is the SRB measure if and only if it has absolutely continuous conditional measures on unstable manifolds.
\end{theorem}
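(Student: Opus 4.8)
This theorem is the restriction to the Hilbert geodesic flow of the Ledrappier--Young characterization of measures obeying Pesin's entropy formula \cite{LY}, so the plan is to carry out the Ledrappier--Young argument in this setting after reducing everything to a hyperbolic map. Since $M$ is closed the flow $\phi^t$ on $HM$ is Anosov, $THM=\mathbb R X\oplus E^s\oplus E^u$, and it suffices to work with the time-one map $\phi^1$, for which $E^s$ (resp. $E^u$) is the Oseledets bundle of the negative (resp. positive) Lyapunov exponents while $\mathbb R X$ carries the single zero exponent. Writing $J^u(w)=|\det(d\phi^1|_{E^u(w)})|$, the chain rule along $E^u$ combined with \eqref{det} and the ergodic theorem gives $\int\log J^u\,d\mu=\int\chi^+\,d\mu$, so the Margulis--Ruelle inequality becomes $h_\mu(\phi)\le\int\log J^u\,d\mu$ and ``$\mu$ is the SRB measure'' means, by definition, that this is an equality (Pesin's formula).

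The common tool for both implications is a measurable partition $\xi$ subordinate to the unstable foliation $W^{su}$ with $\phi^{-1}\xi$ refining $\xi$, with $\bigvee_{n\ge0}\phi^{-n}\xi$ the partition into points, and with $\bigwedge_{n\ge0}\phi^{n}\xi$ the $\sigma$-algebra of $W^{su}$-saturated sets; such a $\xi$ is built using that $W^{su}$ is a $C^1$ foliation with uniformly expanding leaf dynamics. For such a $\xi$ one has $h_\mu(\phi^1,\xi)=H_\mu(\phi^{-1}\xi\mid\xi)$, and the core estimate of \cite{LY} is
$$H_\mu(\phi^{-1}\xi\mid\xi)\;\le\;\int\log J^u\,d\mu\;=\;\int\chi^+\,d\mu,$$
obtained on a typical atom by comparing the conditional measure of $\mu$ with the $\phi^1$-push-forward of conditional Finsler volume $\mathrm{vol}^{u}$ and applying convexity of $t\mapsto t\log t$. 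Moreover equality holds here if and only if the conditional measures of $\mu$ along $W^{su}$ are absolutely continuous with respect to $\mathrm{vol}^{u}$: the equality case is the delicate rigidity statement, forcing the conditionals to transform under holonomy and dynamics exactly as the volume family does, which a density (martingale) argument then upgrades to absolute continuity.

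Granting this, the easy implication is quick: if the conditionals are absolutely continuous, the Ledrappier--Strelcyn computation realizes $H_\mu(\phi^{-1}\xi\mid\xi)=\int\log J^u\,d\mu=\int\chi^+\,d\mu$ outright, whence $h_\mu(\phi)\ge h_\mu(\phi^1,\xi)=\int\chi^+\,d\mu$, and with Margulis--Ruelle this forces $h_\mu(\phi)=\int\chi^+\,d\mu$. For the converse one additionally uses the Ledrappier--Young identity $h_\mu(\phi)=h_\mu(\phi^1,\xi)$ — all forward entropy is carried by the unstable direction — so that $\int\chi^+\,d\mu=h_\mu(\phi)=H_\mu(\phi^{-1}\xi\mid\xi)\le\int\chi^+\,d\mu$ forces equality in the core estimate, hence absolute continuity. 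The one genuinely delicate point, specific to Hilbert geometry, is regularity: the Ledrappier--Young theory is classically stated under $C^2$, whereas here $\partial\Omega$ is only $C^{1+\alpha}$ and the weak (un)stable foliations are $C^1$, so one must check that the regularity actually available from Benoist \cite{Be} and Crampon \cite{cra} (Hölder invariant bundles, $C^1$ weak foliations, Hölder-continuous unstable Jacobian) suffices for the Ledrappier--Strelcyn computation and the Ledrappier--Young equality case to go through. I expect this regularity check, together with the construction of a subordinate partition adapted to $\mathrm{vol}^{u}$, to be the main obstacle; the dynamical heart of the statement is exactly \cite{LY}.
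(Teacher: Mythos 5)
The paper offers no proof of this statement: it is quoted as a theorem of Ledrappier--Young \cite{LY}, specialized to the Hilbert geodesic flow, so your outline is not competing with an argument in the text but reconstructing the one in \cite{LY}. As a sketch of that argument it is accurate and complete in structure: reduction to the time-one map of the Anosov flow, the identity $\int \log J^u\, d\mu = \int \chi^+\, d\mu$, a measurable partition subordinate to $W^{su}$, the convexity estimate $H_\mu(\phi^{-1}\xi \mid \xi) \leq \int \log J^u\, d\mu$ with equality exactly when the conditionals are absolutely continuous, Ledrappier--Strelcyn for the easy implication, and the identity $h_\mu(\phi) = h_\mu(\phi^1,\xi)$ for the converse. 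The one point you flag and defer --- that \cite{LY} is stated for $C^2$ diffeomorphisms while the Hilbert geodesic flow is only $C^{1,\alpha}$ when $\Omega$ is not an ellipsoid --- is the genuine issue, and the paper does not address it either. It is resolvable: for a transitive $C^{1+\alpha}$ Anosov flow the unstable Jacobian is H\"older, the invariant foliations are absolutely continuous with controlled holonomy Jacobians, and the SRB measure coincides with the unique equilibrium state of $-\log J^u$, which is what the subordinate-partition computation and the equality-case rigidity actually require. So your proposal is correct in outline; the regularity check you postpone is precisely the part that must be written out for the statement to be a proof in this $C^{1,\alpha}$ setting rather than a citation, and on that point you and the paper are in the same position.
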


\subsection{Deformation of the representation}\label{Bergeron}
Ehresmann-Thurston described the deformation of geometric structures in terms of the deformation of
developing maps. This theory is rigorously rephrased by Bergeron-Gelander \cite{BG}.

\begin{proposition}
If $\rho_t:\pi_1(M)\ra SL(n,\mathbb R)$  is a smooth deformation of strictly convex real projective
structures, then there exists a continuous associated deformation $D_t$ of the developing map such that
 its image $\Omega_t$ and  the boundary $\partial\Omega_t$
in Hausdorff topology vary continuously, hence the Hilbert metric $d_t$ and the geodesic flow $\phi_t$ vary continuously. Furthermore the convexity forces also the continuity of $\partial \Omega_t$ in
$C^1$-topology.
\end{proposition}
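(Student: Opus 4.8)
The plan is to get continuity of the developing maps from the Ehresmann--Thurston principle in the rigorous form of Bergeron--Gelander \cite{BG}, to promote this ``local'' control to Hausdorff continuity of $\Omega_t$ using the continuity of the Anosov boundary maps \cite{La} (or, alternatively, Benoist's structure theory \cite{Be,Ben}), and then to read off the remaining assertions from the explicit formulas for the Hilbert metric and the Finsler norm together with one elementary fact about convex functions. I expect the passage from the local control of $D_t$ to the global object $\Omega_t$ to be the only genuine obstacle; everything else is essentially formal.

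By \cite{BG}, since $\rho_t$ is a smooth (in particular continuous) path of holonomies of $(\mathrm{PGL}(n+1,\mathbb R),\mathbb{RP}^n)$-structures on the fixed closed manifold $M$, one obtains developing maps $D_t:\widetilde M\to\mathbb{RP}^n$, each a diffeomorphism onto $\Omega_t$ satisfying $D_t\circ\gamma=\rho_t(\gamma)\circ D_t$, and depending continuously on $t$ in the topology of uniform $C^\infty$ convergence on compact subsets of $\widetilde M$; in particular $D_t(\tilde m)\to D_{t_0}(\tilde m)$ for each $\tilde m$. To get Hausdorff continuity of $\Omega_t$, recall that the strictly convex divisible $\Omega_t$ is the interior of the convex hull of its boundary $\partial\Omega_t$, which is the limit set of $\rho_t(\pi_1 M)$, hence the image $\xi_t(\partial_\infty\pi_1 M)$ of the Anosov boundary map. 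Since the boundary maps of Anosov representations vary continuously with the representation \cite{La} and $\partial_\infty\pi_1 M$ is compact, $\xi_t\to\xi_{t_0}$ uniformly; so for $t$ near $t_0$ the sets $\partial\Omega_t$ remain in a fixed bounded region of a fixed affine chart and converge to $\partial\Omega_{t_0}$ in the Hausdorff topology. As the convex-hull operation is Hausdorff-continuous on uniformly bounded sets, $\overline{\Omega_t}\to\overline{\Omega_{t_0}}$, hence also $\Omega_t\to\Omega_{t_0}$ and $\partial\Omega_t\to\partial\Omega_{t_0}$ in the Hausdorff topology. (Alternatively, run a Blaschke selection argument on $\overline{\Omega_t}$, identifying any Hausdorff subsequential limit $C$ with $\overline{\Omega_{t_0}}$: the inclusion $\overline{\Omega_{t_0}}\subset C$ comes from $D_{t_0}(\tilde m)=\lim D_{t_k}(\tilde m)$, the limit $C$ is $\rho_{t_0}(\pi_1 M)$-invariant, and the reverse inclusion follows from the uniqueness of the invariant properly convex set for a group dividing a strictly convex domain \cite{Be,Ben}.)

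The remaining assertions are now formal. For fixed distinct $x,y$ the endpoints $p^\pm_t$ of the chord of $\Omega_t$ through $x,y$ vary continuously with $(t,x,y)$ by Hausdorff continuity of $\partial\Omega_t$, so the cross-ratio formula for $d_{\Omega_t}(x,y)$ and the formula $\big(\tfrac{1}{|x-p^-_t|}+\tfrac{1}{|x-p^+_t|}\big)|v|$ for the Finsler norm $F_t$ vary continuously; carrying everything to the fixed manifold $HM\cong H\Omega_t/\rho_t(\pi_1 M)$ by means of $D_t$ and $\rho_t$, the Hilbert metrics $d_t$ and the unit-$F_t$-speed geodesic flows $\phi_t$ (whose orbits are straight chords) vary continuously, uniformly on compact subsets of $HM$ and on compact time intervals. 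Finally, each $\partial\Omega_t$ is $C^1$ by Benoist \cite{Be}; writing it locally as the graph of a convex function $f_t$ over a common transversal, the Hausdorff convergence $\partial\Omega_t\to\partial\Omega_{t_0}$ forces $f_t\to f_{t_0}$ locally uniformly, and the elementary fact that a locally uniformly convergent sequence of differentiable convex functions with differentiable limit has locally uniformly convergent derivatives (monotonicity of the derivatives plus Dini) upgrades this to convergence of $\partial\Omega_t$ in the $C^1$ topology --- alternatively, Benoist's uniform $C^{1+\e}$ estimate over a compact parameter interval reduces the last step to Arzel\`a--Ascoli.
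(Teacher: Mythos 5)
Your proposal is correct, but the central step---promoting pointwise continuity of $D_t$ to Hausdorff continuity of $\Omega_t$---is done by a genuinely different mechanism than in the paper. The paper works directly with the group elements: for each $\gamma\in\pi_1(M)$ the attracting fixed point $\gamma_t^+$ lies on $\partial\Omega_t$ and its supporting hyperplane is the complementary eigenspace $E_t^\gamma$, both of which vary continuously with $\rho_t$ by continuity of eigenvectors; the domain is then squeezed between the convex hull of finitely many such fixed points (inner approximation) and the intersection of the corresponding supporting half-spaces (outer approximation), and letting the finite set of group elements grow gives Hausdorff convergence. You instead invoke the continuity of Anosov boundary maps \cite{La} (or a Blaschke selection argument combined with the uniqueness of the properly convex set divided by the group \cite{Be,Ben}) to get uniform convergence of $\partial\Omega_t$ at once. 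These are essentially the uniformized and the elementary versions of the same fact---the boundary map sends the attracting fixed point of $\gamma$ in $\partial_\infty\pi_1M$ to $\gamma_t^+$---so both are valid; the paper's squeeze is more self-contained, while your route imports a stronger black box but dispenses with the approximation bookkeeping. On the final claim, your treatment is actually more complete than the paper's: the paper asserts the $C^1$-continuity of $\partial\Omega_t$ with only the implicit justification that the tangent hyperplanes $E_t^\gamma$ at the dense set of fixed points vary continuously, whereas you supply the standard convex-analysis upgrade (locally uniform convergence of convex functions with differentiable limit forces locally uniform convergence of the derivatives), which is the clean way to make ``convexity forces $C^1$-continuity'' precise; just note that in dimension $n>2$ the one-variable ``monotonicity plus Dini'' phrasing should be replaced by the multivariate version of that theorem, and that differentiability of the approximating graphs is not needed, only that of the limit.
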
 
\begin{proof}For any two points $x,y\in M$, and two lifts $\tilde x, \tilde y\in \widetilde M$,
we know that $D_t \tilde x, D_t\tilde y$ vary continuously according to Bergeron-Gelander.  Then the line
connecting $D_t \tilde x, D_t\tilde y$ varies continuously. Now we need to determine two points on $\partial\Omega_t$
where this line intersects and show that they vary continuously. For each $\gamma\in \pi_1(M)$, let $\gamma_t^+$ be the 
line corresponding to the largest eigenvalue, $\gamma_t^-$ the line corresponding to the smallest eigenvalue, and $E_t^\gamma$ the sum of eigenspaces complementary to $\gamma_t^-$. Then it is known that $\gamma_t^+\in\partial \Omega_t$ and $T_{\gamma_t^+}\partial\Omega_t=E_t^\gamma$. Since $\rho_t$ varies continously, $\gamma_t^+$ and $E_t^\gamma$ vary
continuously.  For any given finite set $\gamma_1,\cdots,\gamma_k\in\pi_1(M)$,
$\Omega_t$ is included in the convex set formed by $\cap HE_t^{\gamma_i}$ where $HE_t^{\gamma_i}$ is the half space containing $\Omega_t$. By enlarging the set,
we can see that $\Omega_t$ and $\partial \Omega_t$ vary continuously.
\end{proof}

Note that each point in $\cal P_n(M)$ is a class of $(D,\rho)$ where $D:\widetilde M\ra \mathbb{RP}^n$
is a  developing map and $\rho:\pi_1(M)\ra PGL(n+1,\mathbb R)$ is a holonomy representation.
To each such pair $(D,\rho)$ are associated $\Omega_D=D(\widetilde M)$, the Hilbert metric and the geodesic flow, hence $h^D_{SRB}$ the SRB entropy.
If two structures $(D,\rho)$ and $(D',\rho')$ are equivalent, then $\rho$ and $\rho'$ are conjugate
and $\Omega_D$ and $\Omega_{D'}$ are diffeomorphic by a projective map. Since the entropy is
invariant under conjugacy, we have a well-defined map
$h_{SRB}:\cal P_n(M)\ra \mathbb R$.  It is known that $\cal P_n(M)$ is a component of a character
variety by Benoist \cite{Be}. Hence it is enough to study the continuity along a continuous path.

\section{Continuity of the Sinai-Ruelle-Bowen measure entropy and application}
Throughout this section $M$ is a compact smooth manifold, $D:\widetilde M\ra \R\P^n$ is a developing map, and
$\rho:\pi_1(M)\ra PSL(n+1,\R)$ is the holonomy map of a strictly convex real projective structure. Let $g^h$ be the Hilbert metric of $\Omega=D(\widetilde M)$,  $F$ the Hilbert norm on $TM$, and $\phi^t$ the geodesic flow associated with the Hilbert metric.  Then it is known that this flow is Anosov and it is $C^{1,\alpha}$ due to Benoist \cite{Be}.

Let $g$ be a $C^\infty$-Riemannian metric on $M$. Suppose $\psi^t$ is a flow with the same trajectory as $\phi^t$ but unit speed with respect to $g$. If
$$X=\frac{d\phi^t}{dt}, Z=\frac{d\psi^t}{dt},$$ then
$$X=mZ.$$  We know that
$$F(d\pi X)=g(d\pi Z)=1$$ where $\pi:HM\ra M$ is a projection. Then
$$1=m(x,[v])F(d\pi(Z(x,[v])), $$ and set
\begin{eqnarray}\label{C^1}
\beta=m^{-1}=F(d\pi Z)
\end{eqnarray}

\begin{lemma}$\psi^t$ and $Z$ are $C^\infty$ and $\phi^t$ and $X$ are $C^{1,\alpha}$. Furthermore $\beta:HM\ra\R^+$ is $C^{1,\alpha}$.
\end{lemma}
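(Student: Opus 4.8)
The plan is to isolate the two very different sources of regularity in play: the \emph{direction} of the geodesic flow, which is dictated by the projective structure alone and is $C^\infty$ (indeed real-analytic), and the \emph{speed} of the Hilbert arclength parametrization, which is dictated by the Finsler norm $F$ and hence inherits only the $C^{1,\alpha}$ regularity of $\partial\Omega$. Once this is made precise, the $C^\infty$ part of the statement comes from the first source and the $C^{1,\alpha}$ part from Benoist's theorem \cite{Be}.

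First I would show that the common orbit foliation of $\phi^t$ and $\psi^t$ is $C^\infty$. Through each point $(x,[v])\in HM$ there passes exactly one Hilbert geodesic with that initial direction, and these are the orbits of both flows. In a projective chart $U\subset M$ taking values in an affine chart $\R^n\subset\R\P^n$, the Hilbert geodesics are sub-arcs of straight lines $t\mapsto x_0+tv_0$, so in the trivialization $HM|_U\cong U\times S^{n-1}$ the orbits are the curves $t\mapsto(x_0+tv_0,[v_0])$; these are visibly the leaves of a $C^\infty$ (indeed real-analytic) one-dimensional foliation of $U\times S^{n-1}$. Since the transition maps of the structure lie in $PGL(n+1,\R)$ and carry projective lines to projective lines, these local foliations patch to a global $C^\infty$ foliation $\cal F$ of $HM$. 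Let $L\subset THM$ be its tangent line field; then $L=\R X=\R Z$, and in the chart above $L_{(x,[v])}$ is spanned by the coordinate vector $(v,0)$, so $d\pi$ carries $L$ isomorphically onto the tautological line subbundle of $\pi^*TM$ whose fiber at $(x,[v])$ is $\R v\subset T_xM$. In particular $d\pi|_L$ is a $C^\infty$ bundle isomorphism and $d\pi Z\neq0$ everywhere.

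Next, $Z$ is by construction the section of $L$ whose image under $d\pi$ has $g$-norm one and points in the $\phi$-forward direction; since $L$, $d\pi$ and $g$ are all $C^\infty$ and $d\pi|_L$ is a bundle isomorphism, this section exists, is $C^\infty$ and is nowhere zero, so its flow $\psi^t$ is $C^\infty$. On the other hand $\phi^t$ and its generator $X$ — the geodesic spray of the Hilbert metric — are $C^{1,\alpha}$ by Benoist \cite{Be}. Finally, $X$ and $Z$ are nonvanishing sections of the same line bundle $L$ with $X=mZ$, $m>0$; choosing locally a $C^\infty$ one-form $\theta$ on $HM$ with $\theta(Z)\equiv1$ (possible since $Z$ is $C^\infty$ and nonvanishing) gives $m=\theta(X)$, which is $C^{1,\alpha}$ because $X$ is. As regularity is a local matter, $m:HM\ra\R^+$ is $C^{1,\alpha}$, and hence so is $\beta=m^{-1}=F(d\pi Z)$, being the reciprocal of a positive $C^{1,\alpha}$ function. (Alternatively one may note $\beta=F\circ(d\pi Z)$ with $d\pi Z$ of class $C^\infty$ and $F$ of class $C^{1,\alpha}$ on $TM\setminus\{0\}$, the latter again being the $C^{1,\alpha}$ regularity of $\partial\Omega$.)

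The delicate point — and really the whole substance of the lemma — is the first step: one must not confuse the merely $C^{1,\alpha}$ vector field $X$ with the $C^\infty$ line field $\R X$ that it spans. It is precisely because the \emph{unparametrized} geodesics are a feature of the smooth projective structure, while only the length parametrization sees the $C^{1,\alpha}$ boundary, that $Z$ can be produced smoothly; once that is in hand, the regularities of $\psi^t$, $\phi^t$ and $\beta$, and the reconciliation $X=\beta^{-1}Z$, follow formally from Benoist's theorem.
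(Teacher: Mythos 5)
Your proposal is correct and follows essentially the same route as the paper: the unparametrized geodesic foliation is $C^\infty$ because it is locally the foliation by straight lines in projective charts (so $Z$, the $g$-unit section of that smooth line field, and its flow $\psi^t$ are $C^\infty$), while $\phi^t$, $X$ and the Finsler norm $F$ are only $C^{1,\alpha}$ by Benoist, whence $\beta=F(d\pi Z)$ is $C^{1,\alpha}$. Your write-up merely makes explicit (the patching of local foliations via $PGL(n+1,\R)$, and the local one-form $\theta$ giving $m=\theta(X)$) what the paper states in compressed form.
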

\begin{proof}The geodesic foliation of a strictly convex projective structures is smooth because locally it is the foliation by straight lines in a projective chart. This was previously observed by  Benoist. The fact that $\phi^t$ is $C^{1,\alpha}$ is also due to Benoist.
The $C^1$-ness comes from the $C^1$-ness of Finsler norm, which is again due to the $C^1$-ness of $\partial \Omega$. The generator of the geodesic flow, $X$ is also
$C^{1,\alpha}$, being tangent to a smooth foliation and being  normalized with respect to the Hilbert metric.
But if we reparameterize the flow, using a smooth Riemannian metric, the resulting vector field $Z$ is smooth,  so it is  for the induced flow.
However  $F$ is $C^{1,\alpha}$, hence $\beta$ is $C^{1,\alpha}$.
\end{proof}

We have the following theorem about the change of time, due to Anosov and Sinai \cite{Sinai, Anosov}.
\begin{theorem}If $\phi$ is a $C^1$-Anosov flow and $\psi$ is obtained from $\phi$ by multiplying a positive $C^1$-function on the speed, then $\psi$ is again Anosov. 
\end{theorem}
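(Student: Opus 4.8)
The plan is to exploit the fact that a time change moves points only along the orbits of $\phi$, at a speed that on a compact manifold is bounded and bounded away from $0$; so the linearized flow $d\psi^{t}$ differs from $d\phi^{t}$ only by a bounded, base-point-dependent reparametrization together with a shear in the flow direction, and neither operation destroys uniform hyperbolicity. First I would fix $Z=\beta X$ with $\beta\colon W\to\R^{+}$ of class $C^{1}$ (in our situation $\beta$ is $C^{1,\alpha}$, but $C^{1}$ is all that is used), note that compactness of $W$ gives constants $0<c\le C$ with $c\le\beta\le C$, and hence that $Z$ is complete and $\psi^{t}$ is a $C^{1}$ flow with the same orbits as $\phi$. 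This last fact yields a reparametrization cocycle $\alpha(x,t)=\int_{0}^{t}\beta(\psi^{s}x)\,ds$ satisfying $\psi^{t}(x)=\phi^{\alpha(x,t)}(x)$, $\alpha(x,t+s)=\alpha(x,t)+\alpha(\psi^{t}x,s)$, and $c|t|\le|\alpha(x,t)|\le C|t|$; in particular $|\alpha(x,t)|\to\infty$ uniformly in $x$, linearly fast.

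Next I would differentiate $\psi^{t}=\phi^{\alpha(\cdot,t)}$ by the chain rule to get, for $v\in T_{x}W$,
\[
d_{x}\psi^{t}(v)=d_{x}\phi^{\alpha(x,t)}(v)+\big(d_{x}\alpha(\cdot,t)\,v\big)\,X(\psi^{t}x),
\]
which shows two things: the line field $\R X=\R Z$ is $d\psi^{t}$-invariant, and on the transverse bundle $N:=TW/\R X$ the factor cocycle of $\psi$ is \emph{exactly} the factor cocycle of $\phi$ after the time change, i.e. $\overline{d_{x}\psi^{t}}=\overline{d_{x}\phi^{\alpha(x,t)}}\colon N_{x}\to N_{\psi^{t}x}$. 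Now I would invoke the standard equivalence that $\phi$ is Anosov iff the factor cocycle on $N$ is uniformly hyperbolic --- equivalently, admits continuous transverse cone fields $\mathcal C^{s},\mathcal C^{u}\subset N$ and a time $T_{0}$ after which $\overline{d\phi^{t}}$ strictly contracts $\mathcal C^{s}$ and $\overline{d\phi^{-t}}$ strictly contracts $\mathcal C^{u}$, with the matching exponential norm estimates; the hyperbolic splitting of $N$ is $\overline{E^{s}}\oplus\overline{E^{u}}$, the projection of the $\phi$-splitting (legitimate since $\R X$ meets neither $E^{s}$ nor $E^{u}$).

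Because $\alpha(x,t)\ge T_{0}$ for all $x$ once $t\ge T_{0}/c$, the identity above transports these cone conditions and estimates verbatim to the $\psi$-cocycle on $N$: for instance $\|\overline{d_{x}\psi^{t}}|_{\overline{E^{s}(x)}}\|\le Ce^{-\lambda\alpha(x,t)}\le Ce^{-\lambda c\,t}$ for $t\ge 0$, and symmetrically on $\overline{E^{u}}$ for $t\le 0$. Hence the factor cocycle of $\psi$ on $N$ is uniformly hyperbolic with the same splitting $\overline{E^{s}}\oplus\overline{E^{u}}$, and a routine graph-transform argument lifts it to a continuous $d\psi^{t}$-invariant splitting $TW=\R Z\oplus E^{s}_{\psi}\oplus E^{u}_{\psi}$ (the lift is the unique $d\psi$-invariant one, since $\R X$ carries zero exponent while $\overline{E^{s}},\overline{E^{u}}$ are exponentially dominated), which makes $\psi$ Anosov. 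The step I expect to be the real obstacle is the one just sketched in principle: since $\alpha(x,t)$ is base-point dependent one cannot literally rescale time globally, so the argument must be run through the transverse bundle $N$ --- which is precisely where the shear term $\big(d_{x}\alpha(\cdot,t)\,v\big)X(\psi^{t}x)$ disappears and $d\psi^{t}$ becomes an honest time change of $d\phi^{t}$ --- and one must genuinely use compactness of $W$ (equivalently $\inf\beta>0$), without which the statement fails.
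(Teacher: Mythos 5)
Your argument is sound, but you should be aware that the paper does not prove this statement at all: it is quoted as a classical theorem, with the references to Anosov--Sinai and to Parry carrying the proof. What you have written is essentially a self-contained reconstruction of that classical argument: the reparametrization cocycle $\alpha(x,t)=\int_0^t\beta(\psi^s x)\,ds$ with the two-sided bound $c|t|\le|\alpha(x,t)|\le C|t|$ coming from compactness and $\inf\beta>0$; the chain-rule identity showing that the derivative cocycles of $\psi$ and $\phi$ differ only by a shear along the flow direction; the passage to the quotient bundle $N=TW/\R X$, where the shear vanishes and $\overline{d_x\psi^t}=\overline{d_x\phi^{\alpha(x,t)}}$ exactly; and the transport of the exponential estimates via $\alpha(x,t)\ge ct$. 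All of these steps are correct, and you correctly isolate where compactness is genuinely used. The only place you lean on an assertion you do not fully prove is the final lifting step, namely that uniform hyperbolicity of the quotient cocycle of a nonsingular flow yields a continuous invariant splitting $TW=\R Z\oplus E^s_\psi\oplus E^u_\psi$; this is a true and standard fact (the invariant lift of $\overline{E^s}$ is the graph of a section of $\mathrm{Hom}(\overline{E^s},\R X)$ obtained from an exponentially convergent integral of the shear term, and its continuity and the uniformity of the constants follow from the uniform domination you state), and your one-line justification --- zero exponent on $\R X$ versus exponential domination transversally --- identifies the correct mechanism. Given that the paper itself only cites the result, your level of detail is appropriate, and your proof is a legitimate, more elementary and self-contained alternative to the citation.
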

\begin{remark}In our case, $\psi$ is a $C^\infty$-Anosov flow. This is the $C^\infty$ foliation by straight lines with  $C^\infty$ parametrization.
\end{remark}

Now using the result of Parry \cite{Anosov} we get,
\begin{eqnarray}
\mu^\phi_{SRB}=\beta\mu^\psi_{SRB}/\int_{HM}\beta d\mu^\psi_{SRB}
\end{eqnarray}
and using a theorem of Abramov \cite{Abramov}
\begin{eqnarray}\label{C^r}
h_{SRB}(\phi)=h_{SRB}(\psi)/\int_{HM}\beta d\mu^\psi_{SRB}.
\end{eqnarray}

The continuity of the SRB entropy for $C^\infty$ Anosov flow is due to Contreras \cite{Contreras} (theorem B).
\begin{theorem}\label{C^{r-2}}If $\lambda$ is a $C^r$-Anosov flow, there exists a neighborhood $U$ of $\lambda$ in $C^r$ topology such that the function
$\psi\ra h_{SRB}(\psi)$  is  $ C^{r-2}$.\\
\end{theorem}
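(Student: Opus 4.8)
The plan is to pass through the thermodynamic formalism and then invoke the statistical stability of equilibrium states. Recall first that for a $C^r$ Anosov flow $\psi$ the SRB measure $\mu_\psi$ is the unique equilibrium state of the geometric potential
$$\varphi_\psi(w)=-\frac{d}{dt}\Big|_{t=0}\log\det\big(d\psi^t|_{E^u_\psi(w)}\big),$$
and that this potential has zero topological pressure, $P_\psi(\varphi_\psi)=0$ (for any invariant $\nu$ one has $\int\varphi_\psi\,d\nu=-\int\chi^+\,d\nu$ by the Oseledets theorem, so the Ruelle inequality gives $h_\nu+\int\varphi_\psi\,d\nu\le0$, with equality for $\mu_\psi$ by Pesin's formula — the equality case of the Margulis--Ruelle inequality that characterises the SRB measure). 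Combining the Pesin equality with the variational principle then gives
$$h_{SRB}(\psi)=h_{\mu_\psi}(\psi)=-\int_{HM}\varphi_\psi\,d\mu_\psi .$$
So the task reduces to showing that $\psi\mapsto\varphi_\psi$ and $\psi\mapsto\mu_\psi$ vary smoothly in suitable function and distribution spaces, after which $\psi\mapsto h_{SRB}(\psi)$ inherits the regularity because the pairing $(\mu,\varphi)\mapsto\langle\mu,\varphi\rangle$ is bilinear and bounded.

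Next I would fix a reference model using structural stability of Anosov flows: every $\psi$ close to $\lambda$ in $C^1$ is orbit equivalent to a time change of $\lambda$, so after this reduction the only varying data are the derivative cocycle of $\psi$ and the bundle $E^u_\psi$. The bundle $E^u_\psi$ is the graph of the fixed point of a $C^{r-1}$ graph transform depending $C^{r-1}$ on the $C^r$ parameter $\psi$, and $\varphi_\psi$, being built from first derivatives of $\psi^t$ restricted to this bundle, then depends $C^{r-1}$ on $\psi$ in the relevant Hölder scale.

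The heart of the argument is the $C^{r-2}$ dependence of the equilibrium state $\mu_\psi$ on $\psi$, and I would carry it out along one of the two standard routes. (i) Symbolic dynamics: construct a Markov partition for $\lambda$, realise the nearby flows as suspensions of subshifts of finite type with a $C^{r-1}$-in-$\psi$ family of Hölder roof functions and potentials, and apply Ruelle's theorem that the pressure and the Gibbs/equilibrium state are real-analytic in the Hölder potential; the overall regularity of $\psi\mapsto\mu_\psi$ is then governed by the regularity with which the symbolic data depends on $\psi$. (ii) Transfer operators on anisotropic Banach spaces: the weighted operator $\mathcal{L}_\psi$ with weight $e^{\varphi_\psi}$ acts with a spectral gap on a scale of anisotropic distribution spaces adapted to the hyperbolic splitting of $\lambda$; its leading eigenvalue computes $P_\psi(\varphi_\psi)$ and the corresponding spectral projector computes $\mu_\psi$, and analytic perturbation theory of isolated eigenvalues upgrades the $C^{r-2}$ dependence $\psi\mapsto\mathcal{L}_\psi$ (as bounded operators on the chosen scale) to $C^{r-2}$ dependence of the eigendata. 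Combining the $C^{r-1}$ dependence of $\varphi_\psi$ with the $C^{r-2}$ dependence of $\mu_\psi$ then yields $\psi\mapsto h_{SRB}(\psi)=-\langle\mu_\psi,\varphi_\psi\rangle\in C^{r-2}$ on a $C^r$ neighbourhood $U$ of $\lambda$.

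The main obstacle is precisely this third step: establishing \emph{differentiable}, rather than merely continuous, dependence of the SRB measure on the flow, with the sharp $r\mapsto r-2$ loss. In the symbolic picture the difficulty is that Markov partitions are only Hölder and do not vary smoothly with $\psi$, so one must carefully separate the smooth data (cocycle, roof function) from the non-smooth coding map; in the functional-analytic picture the difficulty is engineering a single anisotropic Banach space, valid uniformly over a neighbourhood of $\lambda$, on which $\mathcal{L}_\psi$ simultaneously has a spectral gap and depends differentiably on $\psi$ — and it is in reconciling these two requirements that the two derivatives (one from passing to the unstable Jacobian, one from the $C^2$ control of the cone fields needed to build the space) are consumed.
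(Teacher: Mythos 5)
The paper does not prove this statement at all: it is quoted verbatim as Theorem B of Contreras \cite{Contreras} and used as a black box. Your proposal is therefore not competing with an argument in the paper but reconstructing the content of the cited reference, and the route you sketch is indeed the one Contreras follows: reduce $h_{SRB}(\psi)=-\int\varphi_\psi\,d\mu_\psi$ via Pesin's formula and the vanishing of the pressure of the geometric potential, use structural stability to work over a fixed model, get $C^{r-1}$ dependence of $E^u_\psi$ and hence of $\varphi_\psi$, and then invoke smooth dependence of pressure and equilibrium states on the potential (Ruelle's analyticity on subshifts, your route (i)). Two caveats. First, your accounting of where the second derivative is lost does not quite match the cited results: the paper's Theorem \ref{C^{r-1}} (Contreras's Theorem C) gives $C^{r-1}$, not $C^{r-2}$, dependence of $\mu_\psi$ as an element of $C^\alpha(M,\R)^*$; the drop to $C^{r-2}$ for the entropy comes from pairing the $C^{r-1}$-varying functional with the potential $\varphi_\psi$, which varies $C^{r-1}$ only as a H\"older function (so that differentiating the pairing one extra time is not possible), rather than from a $C^{r-2}$ regularity of the measure itself. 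Second, and more importantly, your text is a plan rather than a proof: the decisive step --- differentiable (not just continuous) dependence of the equilibrium state, uniformly over a $C^r$ neighbourhood --- is named as ``the main obstacle'' but not carried out, so as written the argument has a genuine gap exactly where the cited theorem does its work. Since the paper itself only needs continuity of $\psi\mapsto h_{SRB}(\psi)$ and imports the full $C^{r-2}$ statement from \cite{Contreras}, citing that reference (as the paper does) or completing your step (i)/(ii) in detail would both be acceptable ways to close it.
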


To control the denominator let us recall that the SRB measure is the unique equilibrium state associated with the infinitesimal volume expansion which is here a smooth function. Since the scaling factor $\beta$  is $C^{1,\alpha}$,  we may again invoke \cite{Contreras} (theorem C) that we may specialize to SRB measures as follows

\begin{theorem}\label{C^{r-1}}If $\lambda$ is a $C^r$-Anosov flow, there exists a neighborhood $U$ of $\lambda$ in $C^r$ topology such that if  $\mu_{\lambda}$ is its SRB measure then for any $0< \alpha <1$ small enough the map $ \lambda \in U \rightarrow  \mu_{\lambda} \in C^{\alpha}(M, \R)^*$ is $C^{r-1}$.
\end{theorem}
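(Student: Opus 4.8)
The plan is to recognize $\mu_\lambda$ as an equilibrium state and then to import the required differentiability from Contreras' Theorem C, the only genuinely new ingredient being the verification that the relevant potential is an admissible input. First I would recall the Bowen--Ruelle--Sinai description of the SRB measure: for a transitive $C^{r}$ Anosov flow $\lambda=\{\lambda^{t}\}$ on the compact manifold $M$, $\mu_\lambda$ is the unique equilibrium state, with respect to $\lambda$, of the geometric potential
$$\varphi_\lambda(x)=-\left.\frac{d}{dt}\right|_{t=0}\log\det\bigl(D\lambda^{t}|_{E^{u}_\lambda(x)}\bigr),$$
i.e.\ minus the infinitesimal rate of volume expansion along the strong unstable foliation. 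Since the strong unstable distribution $E^{u}_\lambda$ of a $C^{r}$ Anosov flow is H\"older, with H\"older exponent and constant bounded uniformly over a $C^{1}$ neighborhood of $\lambda$ by the uniform hyperbolicity there, one gets an exponent $\alpha_{0}=\alpha_{0}(\lambda)\in(0,1)$ and a $C^{r}$ neighborhood $U$ of $\lambda$ such that the geometric potential of every flow in $U$ lies in $C^{\alpha}(M,\R)$, with norm bounded on $U$, for every $0<\alpha\le\alpha_{0}$; this pins down the range of exponents for which the statement is proved.

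The heart of the argument is to show that the assignment sending a flow $\kappa\in U$ to its geometric potential $\varphi_\kappa\in C^{\alpha}(M,\R)$ is of class $C^{r-1}$ for those $\alpha$. Here I would use that $E^{u}_\kappa$ is the unique section of the Grassmann bundle over $M$ whose fiber at $x$ consists of the $(\dim E^{u})$-planes transverse to the center-stable bundle, that is invariant under the derivative cocycle $x\mapsto D\kappa^{t}$, and that this cocycle is fiberwise contracting over the uniformly hyperbolic base. Applying the $C^{k}$ section theorem of Hirsch--Pugh--Shub to this cocycle, and to its dependence on the parameter $\kappa$, after shrinking $U$ so that the bunching needed to absorb $k=r-1$ derivatives holds uniformly, yields that $\kappa\mapsto E^{u}_\kappa$ is $C^{r-1}$ as a map from $U$ into $C^{0}$ sections of the Grassmann bundle. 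Composing with the operations ``restrict $D\kappa^{t}$ to the plane field $E^{u}_\kappa$, take the determinant, differentiate at $t=0$'', which are smooth in local charts, and using that a composition of $C^{r-1}$ maps is $C^{r-1}$ together with the uniform H\"older control from the first step, gives that $\kappa\mapsto\varphi_\kappa$ is $C^{r-1}$ into $C^{\alpha}(M,\R)$.

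The conclusion then follows by composition. Contreras' Theorem C provides, for $C^{r}$ Anosov flows and H\"older potentials that vary in a $C^{r-1}$ fashion with the flow, that the associated equilibrium state depends $C^{r-1}$ on these data as an element of $C^{\alpha}(M,\R)^{*}$; feeding in the family $\kappa\mapsto\varphi_\kappa$ just constructed and invoking the Bowen--Ruelle--Sinai identification shows that this equilibrium state is $\mu_\kappa$, so that $\kappa\in U\mapsto\mu_\kappa\in C^{\alpha}(M,\R)^{*}$ is $C^{r-1}$, after shrinking $U$ once more if necessary. I expect the genuine obstacle to be the second paragraph: the Anosov splitting of a $C^{r}$ flow is in general no smoother than H\"older along the manifold, so $E^{u}_\kappa$ cannot be differentiated in the phase variable; what rescues the argument is that only differentiability in the \emph{parameter} $\kappa$, valued in a fixed space of low regularity, is needed, and this is governed by the uniform hyperbolicity and bunching over $U$ rather than by any smoothness of the foliation. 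The residual work is the bookkeeping required to choose $\alpha$ and $U$ small enough that the bunching for the $C^{r-1}$ section theorem and the hypotheses of Contreras' theorem hold at the same time.
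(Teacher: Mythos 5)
Your proposal follows essentially the same route as the paper: the paper offers no independent proof of this statement, presenting it as a direct specialization of Contreras' Theorem C via the characterization of the SRB measure as the unique equilibrium state of the geometric potential (the infinitesimal unstable volume expansion), which is exactly the reduction you carry out, with your middle paragraph supplying the verification (itself contained in Contreras' paper) that this potential varies $C^{r-1}$ with the flow. The one caveat is your appeal to a bunching condition ``achieved by shrinking $U$'' in order to get $C^{r-1}$ dependence of $\kappa\mapsto E^u_\kappa$: bunching is a spectral property of the flow $\lambda$ itself and cannot be arranged by shrinking the neighborhood, but it is also not needed here, since the parameter directions carry no dynamics and only differentiability in $\kappa$ (with values in $C^0$ sections) is required; the genuine difficulty you correctly anticipate is the non-differentiability of the composition operator $\kappa\mapsto\sigma\circ\kappa^{t}$ on merely continuous sections, which is resolved in Contreras' and Katok--Knieper--Pollicott--Weiss's arguments rather than by the Hirsch--Pugh--Shub section theorem as literally invoked.
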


%

Now we prove that the entropy of SRB along a smooth deformation varies continuously.
\begin{theorem}Let $\rho_t:\pi_1(M)\ra SL(n+1,\R)$ be a smooth deformation of strictly convex projective structures.
Then $t\ra h_{SRB}(\rho_t)$ is a continuous map.
\end{theorem}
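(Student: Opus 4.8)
The plan is to reduce the continuity of $t\mapsto h_{SRB}(\rho_t)$ to the already-quoted continuity theorems of Contreras for $C^\infty$ Anosov flows, via the Abramov time-change formula \eqref{C^r}. Concretely, by \eqref{C^r} we have
$$h_{SRB}(\phi_t)=\frac{h_{SRB}(\psi_t)}{\int_{HM}\beta_t\,d\mu^{\psi_t}_{SRB}},$$
so it suffices to show separately that the numerator $t\mapsto h_{SRB}(\psi_t)$ is continuous and that the denominator $t\mapsto \int_{HM}\beta_t\,d\mu^{\psi_t}_{SRB}$ is continuous and bounded away from $0$. Here $\psi_t$ is the reparametrization of the Hilbert geodesic flow $\phi_t$ to unit speed with respect to a \emph{fixed} background $C^\infty$ Riemannian metric $g$ on $M$, and $\beta_t=F_t(d\pi Z_t)$ as in \eqref{C^1}, where $F_t$ is the Hilbert–Finsler norm of $\Omega_t$.

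The first step is to establish that the family of $C^\infty$ Anosov flows $\psi_t$ varies continuously in the $C^\infty$ topology on vector fields on $HM$. This is where the structural Lemma and the Bergeron–Gelander deformation Proposition of Section~\ref{Bergeron} do the work: the geodesic foliation of $\Omega_t$ is the foliation by projective line segments, which depends only on the projective chart and hence varies $C^\infty$-continuously as $\Omega_t$ moves; the unit-speed parametrization with respect to the fixed smooth metric $g$ is then a $C^\infty$ operation that also varies continuously with $\Omega_t$. Since $\partial\Omega_t$, and with it $\Omega_t$ and the developing maps, vary continuously (indeed $\partial\Omega_t$ varies continuously even in $C^1$), the vector fields $Z_t$ form a $C^\infty$-continuous path. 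Feeding this into Theorem~\ref{C^{r-2}} (Contreras, Theorem~B) with $r$ as large as we like yields continuity — in fact local $C^{r-2}$ regularity — of $t\mapsto h_{SRB}(\psi_t)$.

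The second step handles the denominator. The function $\beta_t$ is only $C^{1,\alpha}$, not $C^\infty$, because it is built from the Finsler norm $F_t$, which inherits only the $C^{1,\alpha}$ regularity of $\partial\Omega_t$; so we cannot treat $\beta_t$ as a smooth observable. Instead we invoke Theorem~\ref{C^{r-1}} (Contreras, Theorem~C): for the $C^\infty$ Anosov family $\psi_t$ the SRB measure varies, as an element of the dual space $C^{\alpha}(HM,\R)^*$, in a $C^{r-1}$ fashion for small $\alpha\in(0,1)$. Choosing $\alpha$ so that $\beta_t\in C^{\alpha}(HM,\R)$ uniformly (which holds since each $\beta_t$ is $C^{1,\alpha'}$ with $\alpha'$ bounded below along the compact parameter interval, and since $\beta_t$ itself varies continuously in $C^{\alpha}$ as $\Omega_t$ and $F_t$ move), we get that $t\mapsto \int_{HM}\beta_t\,d\mu^{\psi_t}_{SRB}$ is a composition of continuous maps and hence continuous. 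Positivity and a uniform lower bound follow because $\beta_t$ is strictly positive and, on the compact manifold $HM$ with the compact parameter interval, bounded below by a positive constant; equivalently, $\beta_t$ compares the Hilbert speed to the $g$-unit speed and these stay uniformly comparable as the convex bodies move within a compact family.

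The main obstacle is the mismatch in regularity between the two ingredients: the reparametrized flow $\psi_t$ is genuinely $C^\infty$, but the time-change function $\beta_t$ — and hence the original Hilbert flow $\phi_t$ — is only $C^{1,\alpha}$, so one cannot apply a single smooth-dependence theorem to $\phi_t$ directly. The resolution is precisely the split above: do all the dynamical (smooth) work on $\psi_t$ via Contreras' Theorems B and C, and reintroduce the non-smooth $\beta_t$ only as a fixed Hölder test function integrated against the continuously-varying SRB measure, which is exactly the amount of regularity Theorem~\ref{C^{r-1}} was stated to give. Once the numerator and denominator are each shown continuous and the denominator bounded below, \eqref{C^r} gives continuity of $t\mapsto h_{SRB}(\phi_t)=h_{SRB}(\rho_t)$, completing the proof.
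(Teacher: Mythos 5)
Your proposal is correct and follows essentially the same route as the paper: reduce via the Abramov formula \eqref{C^r} to the smooth reparametrized flow $\psi_t$, apply Contreras' Theorem~B (Theorem~\ref{C^{r-2}}) to the numerator, and combine Contreras' Theorem~C (Theorem~\ref{C^{r-1}}) with the continuity of $\beta_t$ to control the denominator. Your explicit remarks that the denominator must be bounded away from zero and that $\beta_t$ must lie in $C^{\alpha}$ uniformly are points the paper leaves implicit, but they do not change the argument.
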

\begin{proof}Since $\Omega_t=D_t(\widetilde M)$ is continuously determined by $\rho_t$ as in Section {\ref{Bergeron}}, both
$\Omega_t$ and $\partial\Omega_t$ depend continuously on $t$ with respect to the Hausdorff topology. Fix a $C^\infty$ Riemannian metric on $M$. Now since $\rho_t$ depends smoothly on $t$, the reparameterized geodesic flow $\psi_t$ depends smoothly on $t$. By Equation (\ref{C^1}), $$\beta_t=F_t(d\pi Z_t)$$ is continuous in $t$.  Then, by Equation (\ref{C^r}), and since $h(\psi_t)_{SRB}$ varies continuously by Theorem \ref{C^{r-2}}, it is enough to show that the denominator varies continuously.
\begin{lemma}Let $\alpha$ be as in theorem \ref{C^{r-1}}. If $f_i  \in C^{\alpha}(M,\R) $  converges to $f \in C^{\alpha}(M,\R) $in $C^0$ topology and probability measures $\mu_i$  converge to $\mu$ in $C^{\alpha}(M, \R)^*$, then 
$$\int_M f_i d\mu_i\ra \int_M f d\mu.$$
\end{lemma}
\begin{proof} $$| \int f_i d\mu_i - \int f d\mu|\leq  |\int f_i d\mu_i -\int f d\mu_i|+|\int f d\mu_i -\int f d\mu|$$$$\leq |f_i-f|_\infty + |\int f d(\mu_i-\mu)|.$$
The first term goes to zero since $f_i\ra f$ in $C^0$ topology and the second term goes to zero
since $\mu_i\ra\mu$ in $C^{\alpha}(M, \R)^*$.
\end{proof}

%
\end{proof}

\begin{corollary}Along the smooth deformation of convex real projective structures, the sum $\eta$ of parallel Lyapunov exponent varies continuously.
\end{corollary}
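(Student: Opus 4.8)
The plan is to deduce this corollary directly from the preceding theorem on continuity of $h_{SRB}(\rho_t)$ together with relation (\ref{relation}), which reads $\chi^+ = (n-1) + \frac{1}{2}\eta$. The key observation is that the quantity $\eta$, being the SRB-almost-sure value of the logarithmic determinant of parallel transport, is linked to the SRB entropy and the positive Lyapunov exponent sum through the Pesin formula. First I would recall that, by definition of the SRB measure as the invariant measure realizing equality in the Margulis-Ruelle inequality (the Pesin formula), one has $h_{SRB}(\phi_t) = \int \chi^+ \, d\mu^{\phi_t}_{SRB} = \chi^+_t$, where $\chi^+_t$ denotes the SRB-almost-sure value of the sum of positive Lyapunov exponents for the structure $\rho_t$ — this is well-defined and constant on a full measure set by ergodicity of the SRB measure. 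Hence $\chi^+_t = h_{SRB}(\rho_t)$ varies continuously in $t$ by the theorem just proved, establishing part (1) of Corollary \ref{irreversible} along the way.

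Next I would invoke equation (\ref{relation}): since $\chi^+_t = (n-1) + \frac{1}{2}\eta_t$, we can solve $\eta_t = 2(\chi^+_t - (n-1)) = 2(h_{SRB}(\rho_t) - (n-1))$. As the right-hand side is a continuous function of $t$ (being an affine function of the continuous map $t \mapsto h_{SRB}(\rho_t)$), the map $t \mapsto \eta_t$ is continuous. This completes the proof. The one point requiring care is to confirm that the identity $h_{SRB} = \chi^+$ is genuinely available in the generality needed: the Margulis-Ruelle inequality is stated in the excerpt for the geodesic flow of the Hilbert metric, and the SRB measure is defined precisely as the measure achieving equality there, so $h_{SRB}(\phi_t) = \int \chi^+ d\mu^{\phi_t}_{SRB}$ holds by definition; ergodicity of $\mu_{SRB}$ then forces the integrand to equal its almost-sure constant value $\chi^+_t$, so the integral equals $\chi^+_t$.

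The main (and essentially only) obstacle is bookkeeping rather than substance: one must be sure that the continuity statement for $h_{SRB}$ applies to the Hilbert-metric flow $\phi_t$ itself and not merely to the reparametrized smooth flow $\psi_t$. But this is exactly what the previously proved theorem asserts — via the Abramov formula (\ref{C^r}), $h_{SRB}(\phi_t) = h_{SRB}(\psi_t)/\int_{HM}\beta_t \, d\mu^{\psi_t}_{SRB}$, and both numerator and denominator were shown to vary continuously in $t$. So no new analytic input is needed; the corollary is a formal consequence of the theorem and the linear relation (\ref{relation}). I would state the proof in two or three sentences: "By the Pesin formula and ergodicity of the SRB measure, $h_{SRB}(\rho_t) = \chi^+_t$; by equation (\ref{relation}), $\eta_t = 2\big(h_{SRB}(\rho_t) - (n-1)\big)$; the continuity of $t \mapsto h_{SRB}(\rho_t)$ established in the previous theorem therefore yields the continuity of $t \mapsto \eta_t$."
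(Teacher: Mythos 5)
Your proposal is correct and follows essentially the same route as the paper: ergodicity plus the Pesin formula give $h_{SRB}=\chi^+$, continuity of the entropy gives continuity of $\chi^+$, and the linear relation between $\chi^+$ and $\eta$ (the paper cites Equation (\ref{Lya}), from which (\ref{relation}) is derived) finishes the argument. Your version is if anything slightly more explicit in isolating $\eta_t = 2\big(h_{SRB}(\rho_t)-(n-1)\big)$.
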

\begin{proof}Since the geodesic flow $\phi$ is ergodic with respect to the SRB probability measure $\mu$, and the measurable function $\chi^+$ is $\phi$-invariant, $\chi^+$ is almost constant. Furthermore, by the property of the SRB measure, $h_\mu(\phi)=\int \chi^+ d\mu$, the entropy $h_\mu(\phi)$ must be equal to $\chi^+$.
Since the entropy varies continuously along the smooth deformation, $\chi^+$ varies continuously along the smooth deformation. The claim follows from Equation (\ref{Lya}).
\end{proof}
In particular, in the case of dimension 2 surfaces, it can be stated as 
\begin{corollary} Along the smooth deformation of the convex structure, $\alpha$ varies continuously.
\end{corollary}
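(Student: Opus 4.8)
The plan is to reduce the two-dimensional statement to the continuity of $\chi^+$ already established in the previous corollary, via the relation between parallel Lyapunov exponents and the boundary regularity exponent. In dimension $n=2$ the projective structure has a single positive Lyapunov exponent datum: the decomposition of $T_x\Omega$ reduces to $\R\psi \oplus E_1(w)$ with $E_0 = E_{p+1} = 0$, so $p=1$ and $\dim E_1^u = n-1 = 1$. Hence $\chi^+ = \chi_1^+ = 1 + \eta_1$ and $\eta = 2\eta_1$, so by Equation (\ref{boundary}) the $\mu_{SRB}$-almost-sure value of $\eta(w,v(w)) = \tfrac{2}{\alpha(\xi)} - 1$ is exactly $\eta_1$, giving $\alpha_{SRB} = \frac{2}{\eta_1 + 1} = \frac{2}{\chi^+}$.

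First I would invoke ergodicity of the SRB measure (as in the proof of the preceding corollary) to see that the measurable $\phi$-invariant function $w \mapsto \eta_1(w)$ is $\mu_{SRB}$-almost everywhere equal to a constant $\eta_{SRB}$, hence by Equation (\ref{boundary}) that $\alpha(\xi) = \frac{2}{\eta_{SRB}+1}$ for $\mu_{SRB}$-almost every $w = (x,[\psi])$ with $\psi(\infty) = \xi$; transporting this through the boundary map $w \mapsto \psi(\infty)$ yields a full-Lebesgue-measure subset $F_{SRB} \subset \partial\Omega$ on which $\alpha$ is the constant $\alpha_{SRB} := \frac{2}{\eta_{SRB}+1}$. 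Second, I would record $\chi^+ = 1 + \eta_{SRB}$, so that $\chi^+ \alpha_{SRB} = 2$, and that $2 \le \alpha_{SRB} < \infty$ translates into $0 \le \eta_{SRB} \cdot(n-1)= \eta \le$ the Ruelle bound, with $\alpha_{SRB} = 2$ iff $\eta_{SRB} = 0$ iff $\chi^+ = n-1 = 1$, which by the earlier discussion (equality in the Crampon/Ruelle estimate) happens exactly in the hyperbolic case. Third, continuity: since $t \mapsto \chi^+(\rho_t)$ is continuous by the previous corollary and $\chi^+ > 0$ (indeed $\chi^+ \ge 1$, being a sum over the unstable directions of exponents $\ge 1$ in the normalization $\chi_1^+ \ge 1$ — or more carefully $\chi^+ > 0$ since the flow is Anosov so $E^u \ne 0$), the formula $\alpha_{SRB} = 2/\chi^+$ exhibits $\alpha_{SRB}$ as a continuous function of the structure on $\cal P_2(M)$.

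The main obstacle I anticipate is the passage from "$\eta_1(w)$ is a.e.\ constant with respect to $\mu_{SRB}$ on $H\Omega$" to "$\alpha$ is a.e.\ constant with respect to Lebesgue measure on $\partial\Omega$." This requires knowing that the pushforward under $w \mapsto \psi(\infty)$ of the SRB measure (disintegrated appropriately, e.g.\ along its conditionals on unstable leaves, which by Ledrappier--Young are absolutely continuous) is in the measure class of Lebesgue measure on $\partial\Omega$ — equivalently, that a Lebesgue-full subset of the boundary is hit by regular trajectories carrying the SRB-generic parallel exponent. One should cite the absolute continuity of the SRB unstable conditionals together with the fact that the endpoint map restricted to an unstable leaf is a $C^1$ diffeomorphism onto an open subset of $\partial\Omega$ (the stable/unstable manifolds are $C^1$ since $\partial\Omega$ is $C^1$, as noted in Section \ref{pre}), so that Lebesgue-null sets on $\partial\Omega$ pull back to sets of zero unstable-conditional measure, hence zero $\mu_{SRB}$ measure. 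The remaining bookkeeping — the dimension count giving $\dim E_1^u = 1$, the identity $\chi^+\alpha_{SRB} = 2$, and the hyperbolicity characterization — is routine given Equations (\ref{Lya}), (\ref{relation}), and (\ref{boundary}) and Benoist's rigidity theorem already quoted.
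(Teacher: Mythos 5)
Your proposal is correct and takes essentially the same route as the paper: the paper treats this corollary as an immediate specialization of the preceding one (continuity of the almost-sure value of $\eta$, equivalently of $\chi^+$) through Equation (\ref{boundary}), which in dimension $2$ gives $\alpha_{SRB}=2/\chi^{+}$, and your extra care in transporting SRB-almost-everywhere constancy to Lebesgue-almost-everywhere constancy on $\partial\Omega$ via the absolutely continuous unstable conditionals is precisely what the paper carries out later in the proof of Corollary \ref{shape}. One peripheral slip: the range condition coming from $2\le\alpha_{SRB}<\infty$ should read $-2<\eta\le 0$ (Ruelle's inequality gives $\eta\le 0$), not $0\le\eta$; this does not affect the continuity argument itself.
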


Now we give a proof for Corollary \ref{irreversible} 
\begin{proof}The continuity follows from the main Theorem \ref{main} and the other claims follow from results in \cite{Crampon} and equalities $\chi_i^+=1+\eta_i, \chi_i^-=-1+\eta_i$, $\chi^+\alpha=2$. (4) Let $\omega$ be a volume form. Note that $THM=E^s\oplus E^u \oplus \mathbb R X$ where $X$ is the vector field generating the geodesic flow $\phi^s$ of the Hilbert metric. Choose a Riemannian metric $g$ so that for each
point $w\in T_wHM$, $e_1(w),\cdots,e_{n-1}(w)$ are bases of $E^u(w)$ consisting of unit vectors, and
$b_1(w),\cdots,b_{n-1}(w)$ bases of $E^s(w)$ consisting of unit vectors. Since $M$ is compact and $\omega$ is a continuous volume form, $$\omega(e_1,\cdots,e_{n-1},b_1,\cdots,b_{n-1}, X)<N$$
on $HM$ for some positive $N$.
Since the flow $\phi^s$ preserves the splitting of $E^u, E^s$,
$$d\phi^s(e_i (w))\in E^u(\phi^s(w)), d\phi^s(b_i(w))\in E^s(\phi^s(w)), d\phi^s(X)=X.$$
Then
$$\lim_{s\ra\infty}\frac{1}{s}\log(\frac{d(\phi^s)^*(\omega)(e_i,b_i,X)}{\omega(e_i,b_i,X)})=$$
$$\lim_{s\ra\infty}\frac{1}{s}\log(\frac{\omega(d\phi^s(e_i,b_i,X))}{\omega(e_i,b_i,X)})=\lim_{s\ra\infty}\frac{1}{s}\log(\omega(d\phi^s(e_i,b_i,X)))=$$
$$\lim_{s\ra\infty}\frac{1}{s}\log(|\text{det} d\phi^s|)=\sum \chi_i \text{dim} E_i= \eta. $$
The last equality follows from the fact that $\chi_i^+=1+\eta_i, \chi_i^-=-1+\eta_i$ in Equation (\ref{Eta}) and the second to  last equality follows from Equation (\ref{det}).
\end{proof}
\begin{proof} For Corollary  \ref{shape} and geometric discussion.
By ergodicity of the SRB measure, there exists a flow invariant  set $ W_{SRB} \subset HM$ of full measure with respect to the SRB measure on which the sum $\chi^+$ of positive Liapunov exponents is  constant and coincide with the value of the SRB entropy.
Furthermore, this shows that the sum of exponents $\eta$ for the parallel transport is, on that set, a non positive number $ \eta \leq 0$ such that $h_{SRB}= n-1 +   \frac{1}{2}\eta$ according to Eqn (\ref{relation}) and Crampon's inequality for any invariant measure.
We observe that $ \eta = 0$ is equivalent  to being  Riemannian hyperbolic case. In the hyperbolic case every orbit of the flow is regular. We will observe that this is no longer true when $ \eta < 0$, which is what we will assume in the sequel.
The set $W_{SRB} \subset HM$ being of full SRB measure, for almost all unstable leaf with respect to the invariant transverse measure induced by the SRB measure, almost every trajectory with respect to the conditional measure  on that unstable leaf is regular and  the sum of exponents for the parallel transport is $\eta$. We know that the conditional measure on unstable leaves of the SRB measure are absolutely continuous with respect to the  Lebesgue  measure therefore for each such unstable leaf $L^u (x ,[v])$ there exists a subset  of full Lebesgue measure  $S_{L^u (x ,[v])} \subset \partial \Omega$ such that for any boundary point $\xi \in S_{L^u (x ,[v])}$ is approximately regular with exponent 
\begin{eqnarray}\label{boundary exponent}
  \alpha(\xi)= \frac{2}{1+ \frac{\eta}{2}} \geq  2
\end{eqnarray}
Again, the equality is achieved only in the hyperbolic case.
We can go further and observe some strange effect that will shed some light on the transversal measure of the SRB measure. Such an unstable leaf is associated to a point $\xi ^-$ on the boundary  such that all the geodesics under consideration emanate from that point at $-\infty$.
However we know that any regular geodesic is also also regular by reversing time and then all the reversed geodesics have opposite parallel transport exponent. Then the approximately regular  exponent  at $\xi ^-$ is  $\alpha(\xi^-)= \frac{2}{1- \frac{\eta}{2}} \leq 2$.
Such points have a measure 1 with respect to the tranverse SRB measure which is therefore orthogonal to Lebesgue as soon as it is not hyperbolic. To sum up, there exists on the boundary a set of fulll Lebesgue measure of  points on which the convex is ``flatter" than a conic and a zero Lebesgue measure of points where the convex is ``sharper" than a conic.
A funny remark is that when $ \eta <0 $ there is no triangle with regular geodesics but there are many even sided regular polygons with that regularity.
\end{proof}

\vskip .1 in
\noindent     Patrick Foulon\\ Aix-Marseille Universit\'e, CNRS, Soci\'et\'e Math\'ematique de France, CIRM (Centre International de Rencontres Math\'ematiques), Marseille, France.
\\UMR 822, 163 avenue de Luminy
\\13288 Marseille cedex 9, France\\
\texttt{foulon\char`\@cirm-math.fr}\\
\vskip .005 in
\noindent     Inkang Kim\\
     School of Mathematics\\
     KIAS, Heogiro 85, Dongdaemen-gu\\
     Seoul, 02455, Korea\\
     \texttt{inkang\char`\@ kias.re.kr}
\end{document}